\documentclass[preprint,11pt]{elsarticle}

\usepackage[T1]{fontenc}
\usepackage[utf8]{inputenc}
\usepackage{lmodern}
\usepackage{amsmath,amssymb,amsthm,mathtools}
\usepackage{graphicx}
\usepackage[hidelinks]{hyperref}

\numberwithin{equation}{section}

\newtheorem{lemma}{Lemma}[section]
\newtheorem{theorem}{Theorem}[section]

\newtheorem{corollary}{Corollary}[section]

\newtheorem{remark}{Remark}[section]

\journal{}

\begin{document}

\begin{frontmatter}

\title{On the asymptotics of ground states for a boundary value problem
for the equation
\texorpdfstring{$-\varepsilon \Delta_p u
= a|u|^{q-2}u - b|u|^{\gamma-2}u$}{-epsilon Delta_p u = a|u|^{q-2}u - b|u|^{gamma-2}u}}

\author[inst1]{Yavdat Sh. Il'yasov\corref{cor1}}
\ead{ilyasov02@gmail.com}

\author[inst1]{Elvira I. Turianova}
\ead{elviratur820@gmail.com}

\cortext[cor1]{Corresponding author.}

\affiliation[inst1]{
	organization={Institute of Mathematics with Computing Centre,
	Ufa Federal Research Centre, Russian Academy of Sciences},
	addressline={Chernyshevsky str. 112},
	city={Ufa},
	postcode={450008},
	country={Russia}
}

\begin{abstract}
	We study a singularly perturbed Dirichlet problem for the \(p\)-Laplacian
	with competing superlinear terms,
	\[
	-\varepsilon \Delta_p u
	=
	a(x)|u|^{q-2}u
	-
	b(x)|u|^{\gamma-2}u,
	\qquad
	u|_{\partial\Omega}=0,
	\]
	where \(1<p<q<\gamma<p^*\), \(a\ge0\), \(b\ge\sigma_b>0\), and
	\(\varepsilon>0\) is small. By means of the nonlinear Rayleigh quotient
	method, we introduce two critical parameter values, \(\varepsilon^*\) and
	\(\varepsilon_e^*\), related respectively to the Nehari manifold and to the
	zero energy level. We prove nonexistence of nontrivial weak solutions for
	\(\varepsilon>\varepsilon^*\), and existence of at least two positive weak
	solutions for \(0<\varepsilon<\varepsilon_e^*\); one of them is a ground
	state.
	
	The main result describes the asymptotic behaviour of ground states as
	\(\varepsilon\to0^+\). If, in addition, \(a\ge\sigma_a>0\), then every family of positive ground states \(u_\varepsilon\) converges in measure to
	\[
	\bar u_0(x)=\left(\frac{a(x)}{b(x)}\right)^{1/(\gamma-q)}.
	\]
	The convergence is strong in \(L^r(\Omega)\) for \(1\le r<\gamma\) and weak
	in \(L^r(\Omega)\) for \(1<r\le\gamma\).
\end{abstract}

\begin{keyword}
Rayleigh quotient \sep Nehari manifold \sep ground state \sep singular perturbation \sep \(p\)-Laplacian
\MSC[2020] 35J61 \sep 35J20 \sep 35J70 \sep 35D30
\end{keyword}

\end{frontmatter}

\section{Introduction}

In this paper we consider the following Dirichlet problem with a small
parameter \(\varepsilon>0\):
\begin{equation}\label{eq:1}
	\left\{
	\begin{aligned}
		-\varepsilon \Delta_p u
		&=
		a(x)|u|^{q-2}u
		-
		b(x)|u|^{\gamma-2}u,
		&& x\in\Omega,\\
		u&=0,
		&& x\in\partial\Omega .
	\end{aligned}
	\right.
\end{equation}
Here
\[
\Delta_p u
=
\operatorname{div}\!\left(|\nabla u|^{p-2}\nabla u\right)
\]
is the \(p\)-Laplacian, \(\Omega\subset\mathbb R^N\) is a bounded connected
domain with \(C^1\) boundary, and \(1<p<q<\gamma<p^*\), where
\[
p^* =
\begin{cases}
	\dfrac{pN}{N-p}, & \text{if } p<N, \\[4pt]
	+\infty,         & \text{if } p\ge N .
\end{cases}
\]
We assume that
\begin{equation}\label{eq:Condab}
	a,b\in L^\infty(\Omega),\qquad
	a(x)\ge0,\qquad
	b(x)\ge\sigma_b>0
	\quad\text{for a.e. }x\in\Omega ,
\end{equation}
where \(\sigma_b>0\) is a constant. Throughout the paper we also assume that
\(a\not\equiv0\).

A function \(u\in W^{1,p}_0(\Omega)\) is called a \emph{weak solution} of
problem~\eqref{eq:1} if it is a critical point of the energy functional
\begin{equation}\label{eq:energy}
	\Phi_\varepsilon(u)
	=
	\frac{\varepsilon}{p}\int_\Omega |\nabla u|^p\,dx
	-
	\frac1q\int_\Omega a|u|^q\,dx
	+
	\frac1\gamma\int_\Omega b|u|^\gamma\,dx .
\end{equation}
Here \(W^{1,p}_0:=W^{1,p}_0(\Omega)\) is the closure of
\(C^\infty_0(\Omega)\) in the Sobolev space \(W^{1,p}(\Omega)\).
A weak solution \(u\in W^{1,p}_0(\Omega)\) is called a \emph{ground state}
of \(\Phi_\varepsilon\) if
\[
\Phi_\varepsilon(u)\le \Phi_\varepsilon(w)
\]
for every weak solution \(w\in W^{1,p}_0(\Omega)\) of problem~\eqref{eq:1}.

Problem~\eqref{eq:1} is singularly perturbed: as \(\varepsilon\to0^+\), the
differential order disappears in the limit, and boundary layers may form near
\(\partial\Omega\). Already in the one-dimensional case \(p=2\), \(a=b=1\),
the problem becomes
\begin{equation}\label{eq:11}
	\left\{
	\begin{aligned}
		-\varepsilon u''(x)
		&=
		u^{q-1}(x)-u^{\gamma-1}(x),
		&& x\in(0,1),\\
		u(0)&=u(1)=0 .
	\end{aligned}
	\right.
\end{equation}
The formal method of matched asymptotic expansions
(see, for instance, \cite{Ilin1989,Nayfeh}) leads to the approximation
\[
u(x)
\sim
U\!\left(\frac{x}{\sqrt{\varepsilon}}\right)
+
U\!\left(\frac{1-x}{\sqrt{\varepsilon}}\right)
-
1,
\qquad 0\le x\le1,
\]
where the boundary-layer profile \(U\) is determined by
\[
-U''=U^{q-1}-U^{\gamma-1},
\qquad
U(0)=0,\qquad
U(+\infty)=1 .
\]
This profile admits the integral representation
\[
\int_0^{U(\xi)}
\frac{dt}{
	\sqrt{
		2\left(
		\frac{t^\gamma}{\gamma}
		-
		\frac{t^q}{q}
		+
		\frac1q
		-
		\frac1\gamma
		\right)
	}
}
=
\xi .
\]
The approximation is consistent with the boundary conditions in the leading
order: in the interior of the interval, both arguments
\(x/\sqrt{\varepsilon}\) and \((1-x)/\sqrt{\varepsilon}\) are large, so that
\(U\approx1\) and \(u(x)\approx1\), whereas near the endpoints the
corresponding boundary-layer profile provides the transition to the zero
boundary value.

Extending such a description to multidimensional domains, especially in the
presence of non-symmetric geometry or variable coefficients, is a substantial
mathematical difficulty. At the same time, singularly perturbed nonlinear
boundary value problems form a central topic in modern elliptic theory because
they combine delicate asymptotic analysis with a wide range of applications.
They arise, for instance, in population dynamics, models of infection spread,
control theory, mechanics of materials with memory, nonlinear optics,
physiological models, and predator--prey systems; see, for example,
\cite{Kadalbajoo,Murray,Roos,Volpert}.

A large part of the mathematical literature on singularly perturbed elliptic
problems is devoted to concentration phenomena. In this direction one usually
studies families of solutions which, as the perturbation parameter tends to
zero, develop sharply localized profiles: interior or boundary spikes,
spike-layer solutions, multi-peak solutions, or bubble-type solutions. The
main questions concern the existence of such concentrating solutions, the
number and location of their concentration points, the influence of the
geometry of \(\Omega\), and the role of the coefficients and lower-order terms
in selecting the limiting profiles. This circle of problems has been
extensively studied for semilinear and quasilinear elliptic equations and is
closely related to the analysis of concentration near critical points of
auxiliary potentials, mean-curvature type quantities, or other effective
variational landscapes; see, among many others,
\cite{Ambros_Malch,Ambr_Mal_Ni,Badiale,Dancer,Howes,Lin,Ni,Vasil,Jager}.

The problem considered in the present paper has a different emphasis. Rather
than constructing spike or bubble solutions and describing the fine geometry
of their concentration sets, we study the asymptotic behaviour of ground
states for a competing-superlinear Dirichlet problem with small diffusion.
In this setting the formal limiting equation is no longer an elliptic boundary
value problem, but a pointwise algebraic balance between the two nonlinear
terms. Our main result shows that, under the positivity assumption on \(a\),
ground states converge to the explicit profile
\[
\bar u_0(x)
=
\left(\frac{a(x)}{b(x)}\right)^{1/(\gamma-q)},
\]
with convergence in measure and strong convergence in \(L^r(\Omega)\) for
\(1\le r<\gamma\). Thus the singular limit is described not by localization at
isolated points, but by convergence to a spatially distributed equilibrium
profile determined by the variable coefficients.

A further difficulty is caused by the loss of differential order in the
singular limit. Formally setting \(\varepsilon=0\) reduces the elliptic
problem to the algebraic equation
\[
a(x)|u|^{q-2}u-b(x)|u|^{\gamma-2}u=0,
\]
so that the boundary condition is no longer encoded in the limiting equation.
Moreover, this equation does not select a unique limit. In the nonnegative
class, for example, every function of the form
\[
u(x)=\bar u_0(x)\chi_E(x),
\qquad
\bar u_0(x)=\left(\frac{a(x)}{b(x)}\right)^{1/(\gamma-q)},
\]
where \(E\subset\Omega\) is measurable, is a formal solution of the limiting
equation. Thus the limit problem has a continuum of possible solutions.

For this reason, the main asymptotic issue is not merely to identify the
algebraic balance between the nonlinearities, but to determine which of the
many possible limiting profiles is selected by ground states of the original
elliptic problem. The result proved below shows that, under the positivity
assumption on \(a\), the selected profile is the full positive branch
\(\bar u_0\). This also explains why the natural convergence statement is
formulated first in measure and then in \(L^r(\Omega)\), \(1\le r<\gamma\),
rather than as a classical convergence of solutions of elliptic boundary value
problems.

In the present paper, problem~\eqref{eq:1} is studied by means of the
nonlinear Rayleigh quotient method developed in
\cite{ilyaReil,Ilyasov2022}. Applying this method to the parameter
\(\varepsilon\) leads to the nonlinear generalized Rayleigh quotient
\[
\Upsilon(u)
:=
\frac{
	\left(\displaystyle\int_\Omega a|u|^q\,dx\right)^{
		\frac{\gamma-p}{\gamma-q}}
}{
	\displaystyle
	\int_\Omega |\nabla u|^p\,dx
	\left(\displaystyle\int_\Omega b|u|^\gamma\,dx\right)^{
		\frac{q-p}{\gamma-q}}
}.
\]
Since in the general setting we assume only \(a\ge0\), there may exist
nonzero functions \(u\in W^{1,p}_0(\Omega)\) such that
\[
\int_\Omega a|u|^q\,dx=0.
\]
Thus the extremal quantities are naturally defined on the set
\[
\mathcal D
:=
\left\{
u\in W^{1,p}_0(\Omega)\setminus\{0\}:
\int_\Omega a|u|^q\,dx>0
\right\}.
\]
In what follows, \(\Upsilon\) is considered on \(\mathcal D\).

Two extremal parameter values are associated with this quotient:
\[
\varepsilon^*
=
c_{p,q,\gamma}
\sup_{u\in\mathcal D}\Upsilon(u),
\qquad
\varepsilon_e^*
=
c_{e,p,q,\gamma}
\sup_{u\in\mathcal D}\Upsilon(u),
\]
where
\begin{equation}\label{eq:c-pq-intro}
	c_{p,q,\gamma}
	=
	\frac{\gamma-q}{\gamma-p}
	\left(
	\frac{q-p}{\gamma-p}
	\right)^{\frac{q-p}{\gamma-q}},
	\qquad
	c_{e,p,q,\gamma}
	=
	\frac{p(\gamma-q)}{q(\gamma-p)}
	\left(
	\frac{\gamma(q-p)}{q(\gamma-p)}
	\right)^{\frac{q-p}{\gamma-q}} .
\end{equation}
Moreover,
\[
0<\varepsilon_e^*<\varepsilon^*<+\infty .
\]

The first main result describes existence and nonexistence of positive
solutions of problem~\eqref{eq:1} depending on the parameter
\(\varepsilon\).

\begin{theorem}\label{thm1}
	Let \(\Omega\subset\mathbb R^N\) be a bounded connected domain with
	\(C^1\) boundary, let \(1<p<q<\gamma<p^*\), and assume that
	\eqref{eq:Condab} holds. Then the following assertions hold.
	
	\begin{enumerate}
		\item[\textbf{(1)}]
		If \(\varepsilon>\varepsilon^*\), then problem~\eqref{eq:1} has no
		nontrivial weak solutions in \(W^{1,p}_0(\Omega)\).
		
		\item[\textbf{(2)}]
		For every \(0<\varepsilon<\varepsilon_e^*\), problem~\eqref{eq:1}
		has a positive weak solution
		\[
		u_\varepsilon\in W^{1,p}_0(\Omega).
		\]
		Moreover,
		\begin{enumerate}
			\item[(i)]
			\(u_\varepsilon\) is a ground state;
			
			\item[(ii)]
			\[
			\Phi_\varepsilon(u_\varepsilon)<0,
			\qquad
			\left.
			\frac{d^2}{dt^2}\Phi_\varepsilon(tu_\varepsilon)
			\right|_{t=1}
			>0;
			\]
			
			\item[(iii)]
			\(u_\varepsilon\in C^{1,\kappa}_{\mathrm{loc}}(\Omega)\)
			for some \(\kappa\in(0,1)\).
		\end{enumerate}
		
		\item[\textbf{(3)}]
		For every \(0<\varepsilon<\varepsilon_e^*\), problem~\eqref{eq:1}
		has a second positive weak solution
		\[
		v_\varepsilon\in W^{1,p}_0(\Omega).
		\]
		In addition,
		\[
		\Phi_\varepsilon(v_\varepsilon)>0,
		\qquad
		v_\varepsilon\in C^{1,\kappa}_{\mathrm{loc}}(\Omega)
		\]
		for some \(\kappa\in(0,1)\).
	\end{enumerate}
\end{theorem}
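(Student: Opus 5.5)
The plan is to use the nonlinear Rayleigh quotient method along fibers. For \(u\in\mathcal D\) write
\[
T(u)=\int_\Omega|\nabla u|^p,\qquad A(u)=\int_\Omega a|u|^q,\qquad B(u)=\int_\Omega b|u|^\gamma .
\]
We then study the two fiber quotients obtained by solving \(\Phi_\varepsilon'(tu)(tu)=0\) and \(\Phi_\varepsilon(tu)=0\) for \(\varepsilon\):
\[
R(tu)=\frac{t^{q-p}A(u)-t^{\gamma-p}B(u)}{T(u)},\qquad
R_e(tu)=\frac{\frac pq t^{q-p}A(u)-\frac p\gamma t^{\gamma-p}B(u)}{T(u)} .
\]

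\textbf{Fiber maxima.} An elementary one-variable maximization in \(t>0\) gives
\[
\max_{t>0}R(tu)=c_{p,q,\gamma}\Upsilon(u),\qquad \max_{t>0}R_e(tu)=c_{e,p,q,\gamma}\Upsilon(u),
\]
with the constants from \eqref{eq:c-pq-intro}. Hence \(\varepsilon^*=\sup_{\mathcal D}\max_t R\) and \(\varepsilon^*_e=\sup_{\mathcal D}\max_t R_e\).

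\textbf{Finiteness and positivity of \(\varepsilon^*\).} Finiteness will follow from Hölder interpolation. Since \(p<q<\gamma\), write \(|u|^q=|u|^{p\theta}|u|^{\gamma(1-\theta)}\) with \(q=p\theta+\gamma(1-\theta)\). Using \(b\ge\sigma_b\), this gives
\[
A\le C\|u\|_p^{p\theta}B^{1-\theta},
\]
and Poincaré bounds \(\|u\|_p^p\le CT\). The exponents match \(\Upsilon\) exactly: \(\theta(\gamma-p)/(\gamma-q)=1\) and \((1-\theta)(\gamma-p)/(\gamma-q)=(q-p)/(\gamma-q)\), so \(\Upsilon\) is bounded. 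Positivity holds because \(a\not\equiv0\) yields \(\mathcal D\neq\emptyset\). The inequality \(\varepsilon_e^*<\varepsilon^*\) reduces to \(c_{e}<c\), which I would check directly; alternatively it follows because \(R_e(tu)<R(tu)\) wherever \(R_e(tu)\ge0\).

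\textbf{Part (1).} If \(u\ne0\) is a weak solution, testing with \(u\) gives \(\varepsilon T=A-B\). This forces \(A>B>0\), so \(u\in\mathcal D\) and \(\varepsilon=R(u)\le c\Upsilon(u)\le\varepsilon^*\), which is a contradiction.

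\textbf{Part (2).} For \(\varepsilon<\varepsilon_e^*\) I minimize \(\Phi_\varepsilon\) over \(W^{1,p}_0\). The functional is coercive, since \(\frac1\gamma B\) dominates \(\frac1qA\) by Young's inequality, \(A\le\delta B+C_\delta\|a\|_\infty|\Omega|\). It is weakly lower semicontinuous, since \(\gamma<p^*\) gives compact embedding. So the infimum is attained. It is negative: pick \(u\) with \(c_e\Upsilon(u)>\varepsilon\); then at the maximizing \(t\) we have \(R_e(tu)>\varepsilon\), which is equivalent to \(\Phi_\varepsilon(tu)<0\). Replacing a minimizer by \(|u|\) keeps it a minimizer, hence a nonnegative critical point. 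Any weak solution \(w\) satisfies \(\Phi_\varepsilon(w)\ge\inf\Phi_\varepsilon\), so the minimizer is a ground state. For the second derivative, a global minimizer \(u_\varepsilon\) with \(\Phi_\varepsilon(u_\varepsilon)<0\) sits at the fiber minimum. The fiber \(g(t)=\Phi_\varepsilon(tu)\), after dividing by \(t^p\), has the form \(\alpha-\beta t^{q-p}+\delta t^{\gamma-p}\). Such a function has exactly two critical points when it takes negative values, and at the minimum \(g''>0\) strictly; a direct computation using \(g'(1)=0\) gives \(g''(1)=(q-p)(\ldots)\) with a strict sign. Regularity (iii) comes from \(L^\infty\) bounds via Moser iteration (subcritical growth) followed by DiBenedetto/Tolksdorf \(C^{1,\kappa}_{loc}\). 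Strict positivity comes from Vázquez's strong maximum principle, applied to \(-\varepsilon\Delta_p u+b u^{\gamma-1}\ge0\) with \(u\ge0\), together with connectedness of \(\Omega\).

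\textbf{Part (3).} I would obtain the second solution by the mountain pass theorem. Since \(p<q\), the origin is a strict local minimum: \(\Phi_\varepsilon(u)\ge\frac\varepsilon pT-CT^{q/p}\), which is positive on a small sphere. Meanwhile \(u_\varepsilon\) lies beyond the sphere with \(\Phi_\varepsilon(u_\varepsilon)<0\). Palais–Smale holds by coercivity plus the \((S_+)\) property of \(-\Delta_p\) and compactness of the lower-order terms. The mountain pass level \(c>0\) yields \(v_\varepsilon\) with \(\Phi_\varepsilon(v_\varepsilon)>0\), so \(v_\varepsilon\ne u_\varepsilon\). To get positivity, I would run the argument for the truncated functional with \(u^+\) in the nonlinear terms; the resulting critical points are \(\ge0\) after testing with \(u^-\), and regularity and positivity then follow as above.

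\textbf{Main obstacle.} The delicate point is (3). Standard mountain pass gives no sign information, so the truncation must preserve the geometry, and one must make sure the truncated critical point is nontrivial and genuinely distinct from \(u_\varepsilon\); the energy sign takes care of distinctness. The strict inequality \(g''(1)>0\) in (ii) also needs care, namely excluding a degenerate inflection. This is ruled out because \(\Phi_\varepsilon(u_\varepsilon)<0\) forces \(\varepsilon<R_e\le\max R\), so the fiber equation \(R(tu)=\varepsilon\) has two simple roots.
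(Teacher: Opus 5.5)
Your proposal is correct and follows the paper's proof essentially step by step: nonexistence via $R_{\mathcal N}(u)=\varepsilon\le\varepsilon(u)\le\varepsilon^*$, a global minimizer at a negative level obtained through $R_e$, and a mountain pass for the $u^+$-truncated functional tested against $u^-$. The differences are cosmetic: you get $\frac{d^2}{dt^2}\Phi_\varepsilon(tu_\varepsilon)|_{t=1}>0$ from the fiber picture (the minimizer is the larger, simple root of $R_{\mathcal N}(tu_\varepsilon)=\varepsilon$) rather than the paper's explicit bound $B_\varepsilon>\frac{\gamma(q-p)}{p(\gamma-q)}\varepsilon T_\varepsilon$, and you apply V\'azquez's principle with absorption term $\|b\|_\infty u^{\gamma-1}$ instead of $Cu^{p-1}$.

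Your side claim that $R_e(tu)<R_{\mathcal N}(tu)$ wherever $R_e(tu)\ge0$ is false; it fails for $s_e(u)<t\le(\gamma A(u)/(qB(u)))^{1/(\gamma-q)}$. You do not need it: the two quotients coincide at $t=s_e(u)\neq s_{\mathcal N}(u)$, so $\max_t R_e(tu)=R_{\mathcal N}(s_e(u)u)<\max_t R_{\mathcal N}(tu)$.
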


\begin{remark}
	In Theorem~\ref{thm1}, regularity is asserted only locally in the
	interior of the domain. This is because, for a \(C^1\) boundary, one
	cannot in general expect global \(C^{1,\kappa}\)-regularity of solutions
	up to \(\partial\Omega\).
\end{remark}

\begin{corollary}[The classical case \(p=2\)]
	Let \(p=2\), let \(\partial\Omega\) be of class \(C^{2,\alpha}\), and
	let \(a,b\in C^{0,\alpha}(\overline\Omega)\), with
	\eqref{eq:Condab} satisfied. Then the positive weak solutions
	\(u_\varepsilon\) and \(v_\varepsilon\) obtained in
	Theorem~\ref{thm1} are classical solutions of problem~\eqref{eq:1}.
	In particular,
	\[
	u_\varepsilon,\ v_\varepsilon\in C^2(\Omega)\cap C(\overline\Omega).
	\]
\end{corollary}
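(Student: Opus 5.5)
The idea is to bootstrap the interior regularity already given by Theorem~\ref{thm1} (where $u_\varepsilon,v_\varepsilon\in C^{1,\kappa}_{\mathrm{loc}}(\Omega)$ and both are positive weak solutions) using linear Schauder theory for $p=2$. Then we add a separate argument for continuity up to $\partial\Omega$.

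\textbf{Step 1: global boundedness.} First I would show $u\in L^\infty(\Omega)$ for $u=u_\varepsilon$ or $v_\varepsilon$. The nonlinearity $f(x,s)=a(x)s^{q-1}-b(x)s^{\gamma-1}$ satisfies $f(x,s)\le \|a\|_\infty s^{q-1}$. Moreover $f(x,s)<0$ for $s>M:=(\|a\|_\infty/\sigma_b)^{1/(\gamma-q)}$.

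Test the weak formulation with $(u-M)^+\in W^{1,2}_0(\Omega)$. This gives
\[
\varepsilon\int_\Omega|\nabla (u-M)^+|^2\,dx=\int_{\{u>M\}} f(x,u)(u-M)^+\,dx\le 0 ,
\]
so $(u-M)^+=0$ and $0<u\le M$ a.e. This avoids any Moser iteration and uses only \eqref{eq:Condab}.

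\textbf{Step 2: H\"older regularity up to the boundary.} Since $u$ is bounded, the right-hand side $g(x):=\varepsilon^{-1}f(x,u(x))$ lies in $L^\infty(\Omega)$. The function $u$ is then the unique $W^{1,2}_0$ solution of the linear problem $-\Delta u=g$, $u|_{\partial\Omega}=0$. Because $\partial\Omega\in C^{2,\alpha}$, the $L^r$ theory (Agmon--Douglis--Nirenberg / Gilbarg--Trudinger, Thm.~9.15 together with the uniqueness Lemma~9.17) gives $u\in W^{2,r}(\Omega)$ for every $r<\infty$. By Morrey's embedding, $u\in C^{1,\beta}(\overline\Omega)$ for all $\beta<1$. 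In particular $u\in C(\overline\Omega)$ with $u=0$ on $\partial\Omega$.

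\textbf{Step 3: Schauder bootstrap.} Now $u\in C^{0,1}(\overline\Omega)$, and $a,b\in C^{0,\alpha}(\overline\Omega)$. On the range $[0,M]$ the map $s\mapsto s^{q-1}$ is H\"older continuous with exponent $\min(1,q-1)$, and similarly for $s^{\gamma-1}$ since $\gamma>q>2$. Hence $g\in C^{0,\alpha'}(\overline\Omega)$ for $\alpha'=\min(\alpha,q-2)$ when $q<3$, or simply $\alpha'=\min(\alpha, 1)$ otherwise; positivity and $q>2$ are what we use here. Global Schauder theory (Gilbarg--Trudinger, Thm.~6.14) for $-\Delta u=g$ with zero data on a $C^{2,\alpha'}$ boundary then yields $u\in C^{2,\alpha'}(\overline\Omega)$. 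This is stronger than $C^2(\Omega)\cap C(\overline\Omega)$, so $u$ satisfies \eqref{eq:1} pointwise and is a classical solution.

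\textbf{Main obstacle.} The delicate point is Step~2: identifying the weak solution with the strong $W^{2,r}$ solution. This requires uniqueness of $W^{1,2}_0$ solutions of the linear Dirichlet problem, which holds by the Lax--Milgram theorem. A further subtlety is the H\"older continuity of $s^{q-1}$ near $s=0$ when $q<3$. Since $q>p=2$, the exponent $q-2$ is positive, so this is harmless. If needed, one can work with interior $C^{2,\alpha'}$ estimates only and obtain $C(\overline\Omega)$ from the $W^{2,r}$ step.
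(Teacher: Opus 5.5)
Your argument is correct, and it is more complete than what the paper provides. The paper states this corollary without proof. The proof of Theorem~\ref{thm1} supplies only an $L^\infty$ bound (via Moser iteration) and interior $C^{1,\kappa}_{\mathrm{loc}}(\Omega)$ regularity (DiBenedetto--Tolksdorf). Neither gives continuity up to $\partial\Omega$, so the paper implicitly leaves the boundary behaviour to ``standard theory''.

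You differ from it in two places. First, you replace Moser iteration by testing with $(u-M)^+$ and using $f(x,s)<0$ for $s>M$. This is elementary and works verbatim for every $p$, since one gets $\varepsilon\int_\Omega|\nabla(u-M)^+|^p\,dx\le0$. It also yields the $\varepsilon$-independent bound $0<u\le(\|a\|_\infty/\sigma_b)^{1/(\gamma-q)}$. That is a genuine bonus: combined with the convergence in measure in Theorem~\ref{thmCC2}, it would give strong convergence in $L^r(\Omega)$ for all $1\le r<\infty$, not just $r<\gamma$. Second, you get boundary regularity from global $W^{2,r}$ theory on a $C^{1,1}$ domain plus Lax--Milgram uniqueness. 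This is exactly where the hypothesis $\partial\Omega\in C^{2,\alpha}$ enters, and you never need the interior $C^{1,\kappa}$ statement.

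Two small corrections. In Step~3 the exponent bookkeeping is muddled. Since $q-1>1$ and $\gamma-1>1$, both powers are $C^1$, hence Lipschitz, on $[0,M]$. So $g\in C^{0,\alpha}(\overline\Omega)$ directly, and you obtain $u\in C^{2,\alpha}(\overline\Omega)$. The number $q-2$ is the H\"older exponent of the derivative $(q-1)s^{q-2}$ and is irrelevant here, so there is no subtlety near $s=0$. Also, after invoking the global Schauder theorem you must again identify the $C^{2,\alpha}(\overline\Omega)$ solution with $u$. The uniqueness argument of Step~2 does this: the difference lies in $W^{2,r}(\Omega)\cap W^{1,r}_0(\Omega)$ and is harmonic, hence zero.
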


The next result describes the asymptotic behaviour of ground states as
\(\varepsilon\to0^+\).

\begin{theorem}\label{thmCC2}
	Let \(\Omega\subset\mathbb R^N\) be a bounded connected domain with
	\(C^1\) boundary, let \(1<p<q<\gamma<p^*\), assume that
	\eqref{eq:Condab} holds, and suppose that there exists a constant
	\(\sigma_a>0\) such that
	\[
	a(x)\ge\sigma_a
	\quad\text{for a.e. }x\in\Omega .
	\]
	Let \(u_\varepsilon\) be any family of positive ground states of
	problem~\eqref{eq:1}, \(0<\varepsilon<\varepsilon_e^*\). Then, as
	\(\varepsilon\to0^+\),
	\[
	u_\varepsilon\to\bar u_0
	\quad\text{in measure in }\Omega,
	\]
	where
	\begin{equation}\label{eq:limit-profile}
		\bar u_0(x)
		=
		\left(\frac{a(x)}{b(x)}\right)^{1/(\gamma-q)}
		\quad\text{for a.e. }x\in\Omega .
	\end{equation}
	Moreover,
	\begin{equation}\label{eq:strong-Lr}
		u_\varepsilon-\bar u_0\to0
		\quad\text{strongly in }L^r(\Omega),
		\qquad
		1\le r<\gamma,
	\end{equation}
	and
	\[
	u_\varepsilon-\bar u_0\rightharpoonup0
	\quad\text{weakly in }L^r(\Omega),
	\qquad
	1<r\le\gamma .
	\]
	The limit function \(\bar u_0\) satisfies the limiting equation
	\[
	a(x)\bar u_0^{q-1}(x)
	-
	b(x)\bar u_0^{\gamma-1}(x)
	=
	0
	\quad\text{for a.e. }x\in\Omega .
	\]
\end{theorem}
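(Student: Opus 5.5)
The argument is energetic: compare the ground state energy $\Phi_\varepsilon(u_\varepsilon)$ with the pointwise minimum of the algebraic potential. Introduce
\[
F(x,s)=\frac1\gamma b(x)|s|^\gamma-\frac1q a(x)|s|^q .
\]
For a.e. $x$, the function $s\mapsto F(x,s)$ on $s\ge0$ attains its unique minimum at $s=\bar u_0(x)$, with value
\[
m(x)=-\Big(\frac1q-\frac1\gamma\Big)a(x)^{\gamma/(\gamma-q)}b(x)^{-q/(\gamma-q)}<0 .
\]
Set $M:=\int_\Omega m\,dx$. Since $\Phi_\varepsilon(u)=\frac{\varepsilon}{p}\|\nabla u\|_p^p+\int_\Omega F(x,u)\,dx$, we have the trivial lower bound $\Phi_\varepsilon(u)\ge M$.

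\textbf{Upper bound.} Since $u_\varepsilon$ is a ground state obtained (by Theorem~\ref{thm1}) as a minimizer of $\Phi_\varepsilon$ on the Nehari set, I expect $\Phi_\varepsilon(u_\varepsilon)=\inf_{W^{1,p}_0}\Phi_\varepsilon$. Indeed, the global infimum is negative, is attained (by coercivity, since $\gamma>q$, and weak lower semicontinuity), and any minimizer is a weak solution. If the earlier construction does not give this directly, I would argue it this way.

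I then approximate $\bar u_0$ in $L^\gamma$ by $w_\delta\in C_0^\infty(\Omega)$ with $w_\delta\ge0$ (density, with $\bar u_0\in L^\infty$). This gives
\[
\limsup_{\varepsilon\to0}\Phi_\varepsilon(u_\varepsilon)\le \limsup_{\varepsilon\to0}\Phi_\varepsilon(w_\delta)=\int_\Omega F(x,w_\delta)\,dx ,
\]
and the right-hand side tends to $M$ as $\delta\to0$. Hence
\[
\frac{\varepsilon}{p}\|\nabla u_\varepsilon\|_p^p\to0,\qquad \int_\Omega\bigl(F(x,u_\varepsilon)-m(x)\bigr)\,dx\to0 .
\]
The integrand $F(x,u_\varepsilon)-m$ is nonnegative.

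\textbf{Convergence in measure.} This is the key step. I need a quantitative coercivity of $F(x,\cdot)-m(x)$ away from $\bar u_0(x)$ that is uniform in $x$. Using $\sigma_a\le a\le\|a\|_\infty$ and $\sigma_b\le b\le\|b\|_\infty$, the normalized function $g(\tau)=\frac{\tau^\gamma}{\gamma}-\frac{\tau^q}{q}+\frac1q-\frac1\gamma$ appears after the scaling $s=\bar u_0(x)\tau$:
\[
F(x,s)-m(x)=a^{\gamma/(\gamma-q)}b^{-q/(\gamma-q)}\,g(\tau).
\]
Because $\bar u_0$ is bounded above and below by positive constants, for each $\eta>0$ there is $c(\eta)>0$ with $F(x,s)-m(x)\ge c(\eta)$ whenever $|s-\bar u_0(x)|\ge\eta$. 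Positivity of $u_\varepsilon$ removes the symmetric branch $s<0$. Chebyshev's inequality then gives
\[
\bigl|\{|u_\varepsilon-\bar u_0|\ge\eta\}\bigr|\le c(\eta)^{-1}\int_\Omega(F-m)\,dx\to0 .
\]
This is exactly where $\sigma_a>0$ is essential: without it $\bar u_0$ could vanish, and on that set the coercivity constant would degenerate.

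\textbf{$L^r$ statements.} Since $F(x,s)\ge \frac{\sigma_b}{2\gamma}|s|^\gamma-C$, the bound on $\int_\Omega F(x,u_\varepsilon)\,dx$ gives $\sup_\varepsilon\|u_\varepsilon\|_\gamma<\infty$. Uniform $L^\gamma$-boundedness together with convergence in measure yields uniform integrability of $|u_\varepsilon-\bar u_0|^r$ for every $r<\gamma$. Vitali's theorem then gives strong $L^r$ convergence for $1\le r<\gamma$. For $1<r\le\gamma$, boundedness in $L^r$ together with strong $L^1$ convergence identifies every weak limit as $0$, which gives weak convergence of the whole family. The limiting algebraic equation is immediate from the formula for $\bar u_0$. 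I regard the identification of the ground state level with the global infimum, together with the uniform coercivity estimate, as the only genuinely delicate points.
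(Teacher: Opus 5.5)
Your proposal is correct and follows the paper's proof essentially step for step. The shared steps are: identifying the ground state level with $\inf_{W^{1,p}_0}\Phi_\varepsilon$ through the definition of a ground state; squeezing the energy between $\int_\Omega m\,dx$ and $\int_\Omega F(x,w_\delta)\,dx$ using nonnegative $C_0^\infty$ approximants of $\bar u_0$; a uniform separation estimate plus Chebyshev for convergence in measure; and the $L^\gamma$ bound plus Vitali, with identification of weak limits, for the $L^r$ statements.

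The only local difference is in the separation constant. The paper gets it from a compactness argument over the parameter box $[\sigma_a,\hat a]\times[\sigma_b,\hat b]$ with a large-$s$ cutoff, whereas your rescaling $s=\bar u_0(x)\tau$ to the single profile $g$ makes it explicit. Incidentally, the paper's compactness argument works verbatim with $[0,\hat a]$ in place of $[\sigma_a,\hat a]$; where $a(x)=0$ one simply has $F(x,s)-m(x)=\frac{b(x)}{\gamma}s^\gamma$. So your closing claim that the coercivity constant degenerates without $\sigma_a>0$ describes a limitation of your rescaled bound, not of the estimate itself.
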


\begin{figure}[!ht]
	\begin{minipage}[h]{0.9\linewidth}
		\center{\includegraphics[scale=1.3]{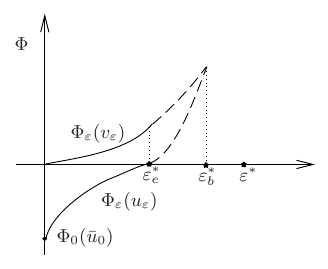}}
		\caption{A schematic picture of the energy levels of two branches of positive
			solutions up to the threshold \(\varepsilon_e^*\). Dashed lines indicate the
			expected continuation of the branches.}
		\label{fig1}
	\end{minipage}
\end{figure}

By the simple scaling \(u\mapsto tu\), problem~\eqref{eq:1} with
\(\varepsilon>0\) can be transformed into either of the following equivalent
forms:
\begin{equation}\label{eq:1lam}
	\left\{
	\begin{aligned}
		-\Delta_p u
		&=
		\lambda a(x)|u|^{q-2}u
		-
		b(x)|u|^{\gamma-2}u,
		&& x\in\Omega,\\
		u&=0,
		&& x\in\partial\Omega,
	\end{aligned}
	\right.
\end{equation}
and
\begin{equation}\label{eq:1nu}
	\left\{
	\begin{aligned}
		-\Delta_p u
		&=
		a(x)|u|^{q-2}u
		-
		\nu b(x)|u|^{\gamma-2}u,
		&& x\in\Omega,\\
		u&=0,
		&& x\in\partial\Omega,
	\end{aligned}
	\right.
\end{equation}
where \(\lambda,\nu>0\).

These problems are in one-to-one correspondence with problem~\eqref{eq:1}
whenever \(\varepsilon\lambda\nu\neq0\). Namely, if
\(u_\varepsilon\in W^{1,p}_0(\Omega)\) is a weak solution of
\eqref{eq:1}, then the functions
\begin{align}
	u_\lambda
	&:=
	\varepsilon^{-1/(\gamma-p)}u_\varepsilon,
	&
	\lambda
	&=
	\varepsilon^{-(\gamma-q)/(\gamma-p)},
	\label{eq:scaling-lambda}
	\\
	u_\nu
	&:=
	\varepsilon^{-1/(q-p)}u_\varepsilon,
	&
	\nu
	&=
	\varepsilon^{(\gamma-q)/(q-p)}
	\label{eq:scaling-nu}
\end{align}
are weak solutions of problems~\eqref{eq:1lam} and~\eqref{eq:1nu},
respectively. The inverse transformations are
\[
u_\varepsilon
=
\lambda^{-1/(\gamma-q)}u_\lambda,
\qquad
u_\varepsilon
=
\nu^{1/(\gamma-q)}u_\nu .
\]
Hence Theorems~\ref{thm1} and~\ref{thmCC2} imply the following assertion.

\begin{corollary}\label{cor:scaled-asymptotics}
	Assume that the hypotheses of Theorem~\ref{thm1} are satisfied. Then:
	\begin{enumerate}
		\item[(1)]
		Problems~\eqref{eq:1lam} and~\eqref{eq:1nu} possess ground states
		\(u_\lambda\) and \(u_\nu\) for
		\[
		\lambda\in(\lambda_e^*,+\infty),
		\qquad
		\nu\in(0,\nu_e^*),
		\]
		respectively, where
		\[
		\lambda_e^*
		=
		(\varepsilon_e^*)^{-(\gamma-q)/(\gamma-p)},
		\qquad
		\nu_e^*
		=
		(\varepsilon_e^*)^{(\gamma-q)/(q-p)}.
		\]
		
		\item[(2)]
		If, in addition, the hypotheses of Theorem~\ref{thmCC2} are
		satisfied, then, as \(\lambda\to+\infty\),
		\[
		\lambda^{-1/(\gamma-q)}u_\lambda
		\to
		\bar u_0
		\quad\text{in measure in }\Omega,
		\]
		\[
		\lambda^{-1/(\gamma-q)}u_\lambda-\bar u_0
		\to0
		\quad\text{strongly in }L^r(\Omega),
		\qquad
		1\le r<\gamma,
		\]
		and
		\[
		\lambda^{-1/(\gamma-q)}u_\lambda-\bar u_0
		\rightharpoonup0
		\quad\text{weakly in }L^\gamma(\Omega).
		\]
		
		Similarly, as \(\nu\to0^+\),
		\[
		\nu^{1/(\gamma-q)}u_\nu
		\to
		\bar u_0
		\quad\text{in measure in }\Omega,
		\]
		\[
		\nu^{1/(\gamma-q)}u_\nu-\bar u_0
		\to0
		\quad\text{strongly in }L^r(\Omega),
		\qquad
		1\le r<\gamma,
		\]
		and
		\[
		\nu^{1/(\gamma-q)}u_\nu-\bar u_0
		\rightharpoonup0
		\quad\text{weakly in }L^\gamma(\Omega).
		\]
		
		In particular, in both cases weak convergence holds in
		\(L^r(\Omega)\) for all \(1<r\le\gamma\).
	\end{enumerate}
\end{corollary}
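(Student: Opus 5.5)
The statement is a direct consequence of Theorems~\ref{thm1} and~\ref{thmCC2} combined with the explicit scalings \eqref{eq:scaling-lambda}--\eqref{eq:scaling-nu}. The plan is to check that these scalings are bijections between weak solutions that preserve the ground-state property. The asymptotic statements then follow by composing with the monotone reparametrizations \(\lambda\leftrightarrow\varepsilon\) and \(\nu\leftrightarrow\varepsilon\).

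First I will verify the correspondence of solutions. Put \(u_\lambda=\varepsilon^{-1/(\gamma-p)}u_\varepsilon\) and substitute \(u_\varepsilon=\varepsilon^{1/(\gamma-p)}u_\lambda\) into \eqref{eq:1}. Using homogeneity, the left side becomes \(\varepsilon^{1+(p-1)/(\gamma-p)}(-\Delta_p u_\lambda)\). The right side becomes \(\varepsilon^{(q-1)/(\gamma-p)}a|u_\lambda|^{q-2}u_\lambda-\varepsilon^{(\gamma-1)/(\gamma-p)}b|u_\lambda|^{\gamma-2}u_\lambda\). Dividing by \(\varepsilon^{(\gamma-1)/(\gamma-p)}\) gives \eqref{eq:1lam} with \(\lambda=\varepsilon^{(q-\gamma)/(\gamma-p)}\). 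The \(\nu\)-case is analogous: dividing by \(\varepsilon^{(q-1)/(q-p)}\) gives \(\nu=\varepsilon^{(\gamma-q)/(q-p)}\).

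For the ground-state property I will compare energies. With \(\Phi^\lambda(w)=\frac1p\int|\nabla w|^p-\frac\lambda q\int a|w|^q+\frac1\gamma\int b|w|^\gamma\), the same computation gives \(\Phi_\varepsilon(\varepsilon^{1/(\gamma-p)}w)=\varepsilon^{\gamma/(\gamma-p)}\Phi^\lambda(w)\). Thus the energies differ by a positive factor that is independent of \(w\), and the analogous identity holds for the \(\nu\)-functional. Since the scaling is a bijection between the solution sets, minimality of energy among solutions transfers in both directions. Positivity is preserved as well.

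The maps \(\varepsilon\mapsto\lambda\) and \(\varepsilon\mapsto\nu\) are strictly monotone bijections of \((0,\infty)\). Under them, \(0<\varepsilon<\varepsilon_e^*\) corresponds to \(\lambda>\lambda_e^*\), because the exponent of \(\varepsilon\) is negative, and to \(\nu<\nu_e^*\). Part (1) then follows from Theorem~\ref{thm1}(2).

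For part (2), let \(u_\lambda\) be a family of positive ground states with \(\lambda\to\infty\). Define \(\varepsilon=\lambda^{-(\gamma-p)/(\gamma-q)}\to0^+\). By the inverse transformation, \(u_\varepsilon:=\lambda^{-1/(\gamma-q)}u_\lambda\) is a family of positive ground states of \eqref{eq:1}. Theorem~\ref{thmCC2} applies to any such family, so it yields convergence in measure, strong \(L^r\) convergence for \(1\le r<\gamma\), and weak \(L^r\) convergence for \(1<r\le\gamma\), in particular for \(r=\gamma\). The \(\nu\to0^+\) case is identical, using \(u_\varepsilon=\nu^{1/(\gamma-q)}u_\nu\).

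The only point needing care is the ground-state transfer, since the definition quantifies over all weak solutions. I handle it with the exact energy identity and the bijectivity of the scaling, not with the equations alone. Everything else is bookkeeping of exponents.
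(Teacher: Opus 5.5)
Your proposal is correct and follows the same route as the paper, which simply invokes the scalings \eqref{eq:scaling-lambda}--\eqref{eq:scaling-nu} and their inverses to transfer Theorems~\ref{thm1} and~\ref{thmCC2} to problems \eqref{eq:1lam} and \eqref{eq:1nu}. Your energy identity \(\Phi_\varepsilon(\varepsilon^{1/(\gamma-p)}w)=\varepsilon^{\gamma/(\gamma-p)}\Phi^\lambda(w)\), together with its \(\nu\)-analogue, makes explicit the transfer of the ground-state property, which the paper leaves implicit.
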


\begin{remark}
	Problems of the form \eqref{eq:1}, \eqref{eq:1lam}, and \eqref{eq:1nu}
	belong to the class of quasilinear parametric problems in which extremal
	parameter values and their variational characterization through nonlinear
	generalized Rayleigh quotients play a central role; see, for instance,
	\cite{ilBob,Il2005,Ilyas2017,IlyasKaye,RamosQuoirinSicilianoSilva}.
	
	In terms of their general variational structure, these problems are close
	to well-known parametric equations with competing nonlinearities, including
	concave--convex type problems that have been actively studied in recent
	decades; see, for example,
	\cite{AmbrosettiBrezisCerami,BartschWillem,Il2005}. However, the case
	considered here has essential qualitative differences. When
	\[
	1<p<q<\gamma,
	\]
	both nonlinearities are superlinear relative to the \(p\)-Laplacian, and
	the competition between the positive term of order \(q\) and the negative
	term of order \(\gamma\) leads to a different geometry of the energy
	functional and to a different structure of extremal parameters and
	bifurcation diagrams.
\end{remark}


\section{Preliminaries}
\label{sec:3}

In what follows, \(L^r:=L^r(\Omega)\), \(1<r<+\infty\), denotes the space of
measurable functions with finite norm
\[
\|u\|_r
=
\left(\int_\Omega |u|^r\,dx\right)^{1/r}.
\]
By the Poincaré--Friedrichs inequality, the norm in \(W^{1,p}_0(\Omega)\) can
be equivalently defined by
\[
\|u\|_{1,p}
=
\left(\int_\Omega |\nabla u|^p\,dx\right)^{1/p}.
\]

We set
\[
T(u):=\int_\Omega |\nabla u|^p\,dx,
\qquad
A(u):=\int_\Omega a(x)|u|^q\,dx,
\qquad
B(u):=\int_\Omega b(x)|u|^\gamma\,dx .
\]
Since \(b(x)\ge\sigma_b>0\) for a.e. \(x\in\Omega\), we have
\[
B(u)>0
\quad
\text{for all }u\in W^{1,p}_0(\Omega)\setminus\{0\}.
\]
We also assume throughout that \(a\not\equiv0\). Hence the set
\[
\mathcal D
:=
\left\{
u\in W^{1,p}_0(\Omega)\setminus\{0\}: A(u)>0
\right\}
\]
is nonempty.

The Nehari manifold associated with problem~\eqref{eq:1} is given by
\[
\mathcal N_\varepsilon
:=
\left\{
u\in W^{1,p}_0(\Omega)\setminus\{0\}:
\varepsilon T(u)-A(u)+B(u)=0
\right\}.
\]
Equivalently,
\[
D_u\Phi_\varepsilon(u)(u)=0 .
\]
Observe that, for \(\varepsilon>0\), every function
\(u\in\mathcal N_\varepsilon\) satisfies
\[
A(u)=\varepsilon T(u)+B(u)>0.
\]
Therefore, when positive values of the parameter are considered, it is natural
to restrict attention to directions \(u\in\mathcal D\). Only such directions
can generate an intersection of the ray \(\{su:s>0\}\) with the Nehari
manifold for \(\varepsilon>0\).

Following the approach based on nonlinear generalized Rayleigh quotients
\cite{Ilyasov2022}, we introduce two Rayleigh quotients associated,
respectively, with the Nehari manifold and with the zero energy level. The
first quotient is defined by
\[
R_{\mathcal N}(u)
:=
\frac{A(u)-B(u)}{T(u)},
\qquad
u\in W^{1,p}_0(\Omega)\setminus\{0\}.
\]
Then
\[
\mathcal N_\varepsilon
=
\left\{
u\in W^{1,p}_0(\Omega)\setminus\{0\}:
R_{\mathcal N}(u)=\varepsilon
\right\}.
\]

Fix \(u\in\mathcal D\) and consider the function
\[
s\mapsto R_{\mathcal N}(su),
\qquad s>0.
\]
We have
\[
R_{\mathcal N}(su)
=
\frac{s^qA(u)-s^\gamma B(u)}{s^pT(u)}
=
\frac{A(u)s^{q-p}-B(u)s^{\gamma-p}}{T(u)} .
\]
Since \(1<p<q<\gamma\), this function has a unique positive critical point,
which is the point of its global maximum. A direct computation gives
\begin{equation}
	s_{\mathcal N}(u)
	=
	\left(
	\frac{(q-p)A(u)}
	{(\gamma-p)B(u)}
	\right)^{1/(\gamma-q)} .
	\label{eq:smu}
\end{equation}

Substituting \(s=s_{\mathcal N}(u)\) into \(R_{\mathcal N}(su)\), we obtain
the nonlinear generalized Rayleigh quotient
\begin{equation}
	\varepsilon(u)
	:=
	R_{\mathcal N}(s_{\mathcal N}(u)u)
	=
	c_{p,q,\gamma}
	\frac{
		A(u)^{\frac{\gamma-p}{\gamma-q}}
	}{
		T(u)B(u)^{\frac{q-p}{\gamma-q}}
	},
	\qquad u\in\mathcal D,
	\label{eq:mu_u}
\end{equation}
where
\begin{equation}
	c_{p,q,\gamma}
	=
	\frac{\gamma-q}{\gamma-p}
	\left(
	\frac{q-p}{\gamma-p}
	\right)^{\frac{q-p}{\gamma-q}} .
	\label{eq:c_pq}
\end{equation}

We define the extremal value
\begin{equation}
	\varepsilon^*
	:=
	\sup_{u\in\mathcal D}\varepsilon(u).
	\label{eq:mustar}
\end{equation}
Since \(a\not\equiv0\), the set \(\mathcal D\) is nonempty; hence
\[
\varepsilon^*>0.
\]
We now show that \(\varepsilon^*<+\infty\). By the Hölder,
Poincaré--Friedrichs, and Sobolev inequalities, there exists a constant
\(C>0\), independent of \(u\), such that
\[
A(u)
=
\int_\Omega a(x)|u|^q\,dx
\le
C
T(u)^{\frac{\gamma-q}{\gamma-p}}
B(u)^{\frac{q-p}{\gamma-p}} .
\]
Indeed, since \(q\in(p,\gamma)\), the \(L^q\)-norm is interpolated between
the \(L^p\)- and \(L^\gamma\)-norms, while the \(L^p\)-norm is controlled by
\(\|\nabla u\|_p\). The condition \(b(x)\ge\sigma_b>0\) allows us to replace
the \(L^\gamma\)-norm by the corresponding \(B(u)\)-term. Substituting this
estimate into \eqref{eq:mu_u}, we obtain
\begin{equation}
	0<\varepsilon^*<+\infty .
	\label{eq:214}
\end{equation}

The second Rayleigh quotient corresponds to the zero energy level
(cf. \cite{Carles,MarcCarlIl,ilyaReil,Ilyasov2022}):
\[
R_e(u)
:=
\frac{
	\frac1qA(u)-\frac1\gamma B(u)
}{
	\frac1pT(u)
},
\qquad
u\in W^{1,p}_0(\Omega)\setminus\{0\}.
\]
Then
\[
R_e(u)=\varepsilon
\quad
\Longleftrightarrow
\quad
\Phi_\varepsilon(u)=0 .
\]

For fixed \(u\in\mathcal D\), the function
\[
s\mapsto R_e(su),
\qquad s>0,
\]
has a unique positive critical point. It is given by
\begin{equation}
	s_e(u)
	=
	\left(
	\frac{\gamma(q-p)A(u)}
	{q(\gamma-p)B(u)}
	\right)^{1/(\gamma-q)} .
	\label{eq:smu_e}
\end{equation}
Substituting \(s=s_e(u)\) into \(R_e(su)\), we obtain
\begin{equation}
	\varepsilon_e(u)
	:=
	R_e(s_e(u)u)
	=
	c_{e,p,q,\gamma}
	\frac{
		A(u)^{\frac{\gamma-p}{\gamma-q}}
	}{
		T(u)B(u)^{\frac{q-p}{\gamma-q}}
	},
	\qquad u\in\mathcal D,
	\label{eq:mu_e_u}
\end{equation}
where
\begin{equation}
	c_{e,p,q,\gamma}
	=
	\frac{p(\gamma-q)}{q(\gamma-p)}
	\left(
	\frac{\gamma(q-p)}{q(\gamma-p)}
	\right)^{\frac{q-p}{\gamma-q}} .
	\label{eq:c_pq_e}
\end{equation}
Set
\[
\varepsilon_e^*
:=
\sup_{u\in\mathcal D}\varepsilon_e(u).
\]
The same estimate as above yields
\[
0<\varepsilon_e^*<+\infty .
\]

\begin{lemma}
	\label{lem:inters}
	For every \(u\in\mathcal D\) and every \(s>0\), the following equivalence
	holds:
	\[
	R_e(su)=R_{\mathcal N}(su)
	\quad
	\Longleftrightarrow
	\quad
	s=s_e(u).
	\]
\end{lemma}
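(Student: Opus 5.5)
We reduce the equality $R_e(su)=R_{\mathcal N}(su)$ to an algebraic equation in $s$ by writing both quotients along the ray $\{su:s>0\}$ explicitly:
\[
R_{\mathcal N}(su)=\frac{A(u)s^{q-p}-B(u)s^{\gamma-p}}{T(u)},
\qquad
R_e(su)=\frac{\frac pqA(u)s^{q-p}-\frac p\gamma B(u)s^{\gamma-p}}{T(u)} .
\]
Since $T(u)>0$ for $u\ne0$, we may multiply by $T(u)$ and divide by $s^{q-p}>0$. The equality $R_e(su)=R_{\mathcal N}(su)$ is then equivalent to
\[
\Bigl(1-\frac pq\Bigr)A(u)=\Bigl(1-\frac p\gamma\Bigr)B(u)\,s^{\gamma-q},
\qquad\text{i.e.}\qquad
s^{\gamma-q}=\frac{\gamma(q-p)A(u)}{q(\gamma-p)B(u)} .
\]

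The right-hand side is strictly positive, because $u\in\mathcal D$ gives $A(u)>0$ and $B(u)>0$ for $u\neq0$. Also $\gamma>q$, so $s\mapsto s^{\gamma-q}$ is a strictly increasing bijection of $(0,\infty)$. The equation therefore has exactly one positive solution, and it coincides with $s_e(u)$ from \eqref{eq:smu_e}. Since every step above is reversible, this gives both implications.

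As a cross-check, one can verify the identity $\frac{d}{ds}\bigl(s^pR_e(su)\bigr)=\frac{p}{s}\,s^pR_{\mathcal N}(su)$. It shows that $R_e$ and $R_{\mathcal N}$ meet exactly at the critical point of $R_e$, which again identifies $s_e(u)$.

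No step is a real obstacle. The only points needing care are the positivity of $A(u)$, which is why $u\in\mathcal D$ is required, and the sign of $\gamma-q$, which gives uniqueness.
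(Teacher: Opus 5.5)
Your proof is correct, but your main argument differs from the paper's. The paper sets \(\psi(s)=R_e(su)\) and proves the identity \(\psi'(s)=\frac{p}{s}\bigl(R_{\mathcal N}(su)-R_e(su)\bigr)\). So the two quotients coincide exactly at critical points of \(\psi\), and the lemma then follows from the fact, stated without derivation in Section~2, that \(s_e(u)\) is the unique positive critical point of \(s\mapsto R_e(su)\).

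You instead equate the two explicit expressions along the ray and solve \(s^{\gamma-q}=\frac{\gamma(q-p)A(u)}{q(\gamma-p)B(u)}\) directly. This is more self-contained: it does not lean on that uniqueness assertion, whose verification is the same algebra anyway. It also shows explicitly where \(A(u)>0\) (membership in \(\mathcal D\)) and \(\gamma>q\) enter.

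The paper's route buys the structural fact behind the Rayleigh-quotient method. The Nehari curve and the zero-energy curve meet precisely at the extremum of \(s\mapsto R_e(su)\), so \(\varepsilon_e(u)=R_{\mathcal N}(s_e(u)u)\). Hence \(s_e(u)u\in\mathcal N_{\varepsilon_e(u)}\) with \(\Phi_{\varepsilon_e(u)}(s_e(u)u)=0\). Your cross-check identity \(\frac{d}{ds}\bigl(s^pR_e(su)\bigr)=\frac{p}{s}\,s^pR_{\mathcal N}(su)\) is equivalent to the paper's formula for \(\psi'\). So that remark in your proposal is exactly the paper's argument.
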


\begin{proof}
	Fix \(u\in\mathcal D\) and put
	\[
	\psi(s):=R_e(su),
	\qquad s>0.
	\]
	Then
	\[
	\psi(s)
	=
	\frac{p}{T(u)}
	\left(
	\frac{A(u)}q s^{q-p}
	-
	\frac{B(u)}\gamma s^{\gamma-p}
	\right),
	\]
	and
	\[
	\psi'(s)
	=
	\frac{p}{T(u)}
	\left(
	\frac{q-p}{q}A(u)s^{q-p-1}
	-
	\frac{\gamma-p}{\gamma}B(u)s^{\gamma-p-1}
	\right).
	\]
	On the other hand,
	\[
	R_{\mathcal N}(su)-R_e(su)
	=
	\frac{1}{T(u)}
	\left(
	\frac{q-p}{q}A(u)s^{q-p}
	-
	\frac{\gamma-p}{\gamma}B(u)s^{\gamma-p}
	\right).
	\]
	Therefore
	\[
	\psi'(s)
	=
	\frac{p}{s}
	\left(
	R_{\mathcal N}(su)-R_e(su)
	\right).
	\]
	Hence
	\[
	R_e(su)=R_{\mathcal N}(su)
	\quad
	\Longleftrightarrow
	\quad
	\psi'(s)=0 .
	\]
	Since the equation \(\psi'(s)=0\) has a unique positive solution, namely
	\(s_e(u)\), the lemma follows.
\end{proof}

\begin{lemma}
	\label{lem:eps-greater-eps-e}
	For every \(u\in\mathcal D\), the strict inequality
	\[
	\varepsilon(u)>\varepsilon_e(u)
	\]
	holds. Moreover,
	\[
	\varepsilon^*>\varepsilon_e^* .
	\]
\end{lemma}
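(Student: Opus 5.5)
Since both \(\varepsilon(u)\) and \(\varepsilon_e(u)\) have the form of a constant times the same quotient \(\Upsilon(u)\) (see \eqref{eq:mu_u} and \eqref{eq:mu_e_u}), one route is to compare \(c_{p,q,\gamma}\) and \(c_{e,p,q,\gamma}\) directly. A cleaner route avoids computing with these constants and uses Lemma~\ref{lem:inters} together with the maximizing property of \(s_{\mathcal N}(u)\). We follow the second route.

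Fix \(u\in\mathcal D\). By Lemma~\ref{lem:inters}, \(R_e(s_e(u)u)=R_{\mathcal N}(s_e(u)u)\), so
\[
\varepsilon_e(u)=R_e(s_e(u)u)=R_{\mathcal N}(s_e(u)u)\le \max_{s>0}R_{\mathcal N}(su)=R_{\mathcal N}(s_{\mathcal N}(u)u)=\varepsilon(u).
\]
Here we used that \(s_{\mathcal N}(u)\) is the unique global maximizer of \(s\mapsto R_{\mathcal N}(su)\). Strictness follows because the maximizer is unique, so equality would force \(s_e(u)=s_{\mathcal N}(u)\). Comparing \eqref{eq:smu} and \eqref{eq:smu_e}, this would require \(\gamma/q=1\), which contradicts \(q<\gamma\). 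In fact \(s_e(u)>s_{\mathcal N}(u)\), and since \(R_{\mathcal N}(su)\) is strictly decreasing for \(s>s_{\mathcal N}(u)\), the inequality \(\varepsilon_e(u)<\varepsilon(u)\) is strict.

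For the second assertion, the pointwise strict inequality passes only to a non-strict inequality of suprema. To get strictness, note that both quantities are positive multiples of \(\Upsilon(u)\). Hence
\[
\frac{\varepsilon_e(u)}{\varepsilon(u)}=\frac{c_{e,p,q,\gamma}}{c_{p,q,\gamma}}=:\theta
\]
is independent of \(u\), and the first part gives \(\theta<1\). Consequently \(\varepsilon_e^*=\theta\,\varepsilon^*\). Since \(0<\varepsilon^*<+\infty\) by \eqref{eq:214}, this yields \(\varepsilon_e^*<\varepsilon^*\).

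The only delicate point is this last step: passing from a pointwise strict inequality to a strict inequality between suprema. It is handled by the constant-ratio observation together with the finiteness of \(\varepsilon^*\).
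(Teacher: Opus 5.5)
Your proof is correct. For the pointwise inequality it takes a different route from the paper; for the inequality of suprema it is the same.

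\textbf{Pointwise inequality.} The paper does not use Lemma~\ref{lem:inters} here. It computes
\[
\frac{c_{p,q,\gamma}}{c_{e,p,q,\gamma}}=\frac{q}{p}\left(\frac{q}{\gamma}\right)^{\frac{q-p}{\gamma-q}}
\]
and shows this exceeds $1$ using the elementary inequalities $\ln x<x-1$ (with $x=\gamma/q$) and $1-t<-\ln t$ (with $t=p/q$).

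You instead use Lemma~\ref{lem:inters} to read $\varepsilon_e(u)$ as a value of $R_{\mathcal N}$ on the ray through $u$, namely $\varepsilon_e(u)=R_{\mathcal N}(s_e(u)u)$, and bound it by the maximum $\varepsilon(u)$. Strictness comes from $s_e(u)/s_{\mathcal N}(u)=(\gamma/q)^{1/(\gamma-q)}>1$ and the strict decrease of $s\mapsto R_{\mathcal N}(su)$ beyond its peak.

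\textbf{What each route buys.} Your argument needs neither the explicit constants nor any logarithmic estimate. It also explains the inequality geometrically: on every ray, the zero-energy point $s_e(u)$ lies strictly past the peak $s_{\mathcal N}(u)$ of the Nehari quotient. The non-strict bound $\varepsilon_e(u)\le\varepsilon(u)$ would survive in settings where the two quotients are not proportional. The paper's computation does not depend on Lemma~\ref{lem:inters} and gives the explicit value of the ratio.

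\textbf{Suprema.} Both arguments rely on the same fact: $\varepsilon_e(u)/\varepsilon(u)$ is a constant independent of $u$, and $0<\varepsilon^*<+\infty$. Your treatment of this step matches the paper's.
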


\begin{proof}
	It follows from formulas \eqref{eq:mu_u} and \eqref{eq:mu_e_u} that
	\(\varepsilon(u)\) and \(\varepsilon_e(u)\) differ only by constant
	factors. Thus it is enough to prove that
	\[
	c_{p,q,\gamma}
	>
	c_{e,p,q,\gamma}.
	\]
	We have
	\[
	\frac{c_{p,q,\gamma}}
	{c_{e,p,q,\gamma}}
	=
	\frac{q}{p}
	\left(\frac{q}{\gamma}\right)^{
		\frac{q-p}{\gamma-q}
	}.
	\]
	Let us show that this quantity is greater than \(1\). Denote
	\[
	\theta:=\frac{p}{q}\in(0,1),
	\qquad
	x:=\frac{\gamma}{q}>1.
	\]
	Then the required inequality is equivalent to
	\[
	x^{\frac{1-\theta}{x-1}}<\frac1\theta .
	\]
	After taking logarithms, it is enough to verify that
	\[
	\frac{1-\theta}{x-1}\ln x<-\ln\theta .
	\]
	Since \(x>1\), we have \(\ln x<x-1\), and since \(0<\theta<1\), we have
	\(1-\theta<-\ln\theta\). Consequently,
	\[
	\frac{1-\theta}{x-1}\ln x
	<
	1-\theta
	<
	-\ln\theta .
	\]
	Thus
	\[
	c_{p,q,\gamma}
	>
	c_{e,p,q,\gamma}.
	\]
	Hence
	\[
	\varepsilon(u)>\varepsilon_e(u)
	\quad
	\text{for all }u\in\mathcal D.
	\]
	
	Since
	\[
	\varepsilon^*
	=
	c_{p,q,\gamma}
	\sup_{u\in\mathcal D}
	\frac{
		A(u)^{\frac{\gamma-p}{\gamma-q}}
	}{
		T(u)B(u)^{\frac{q-p}{\gamma-q}}
	},
	\]
	whereas
	\[
	\varepsilon_e^*
	=
	c_{e,p,q,\gamma}
	\sup_{u\in\mathcal D}
	\frac{
		A(u)^{\frac{\gamma-p}{\gamma-q}}
	}{
		T(u)B(u)^{\frac{q-p}{\gamma-q}}
	},
	\]
	and this common supremum is positive and finite, we obtain
	\[
	\varepsilon^*>\varepsilon_e^* .
	\]
	The lemma is proved.
\end{proof}


\section{Proof of Theorem~\ref{thm1}}

We first prove \textbf{(1)}. Suppose, arguing by contradiction, that for
some \(\varepsilon>\varepsilon^*\) problem~\eqref{eq:1} has a nontrivial weak
solution \(\bar u\in W^{1,p}_0(\Omega)\), \(\bar u\not\equiv0\). Then
\[
D_u\Phi_\varepsilon(\bar u)(\bar u)=0,
\]
that is,
\[
\varepsilon\int_\Omega |\nabla \bar u|^p\,dx
-
\int_\Omega a(x)|\bar u|^q\,dx
+
\int_\Omega b(x)|\bar u|^\gamma\,dx
=0.
\]
Since \(\varepsilon>0\), \(\bar u\not\equiv0\), and \(b\ge\sigma_b>0\), it
follows that
\[
\int_\Omega a(x)|\bar u|^q\,dx>0.
\]
Hence \(\bar u\in\mathcal D\). Moreover,
\[
R_{\mathcal N}(\bar u)=\varepsilon .
\]
By the definition of \(\varepsilon(\bar u)\) as the maximum of
\(s\mapsto R_{\mathcal N}(s\bar u)\), we have
\[
\varepsilon
=
R_{\mathcal N}(\bar u)
\le
\varepsilon(\bar u)
\le
\varepsilon^*
<
\varepsilon ,
\]
which is impossible. Thus, for \(\varepsilon>\varepsilon^*\),
problem~\eqref{eq:1} has no nontrivial weak solutions.

\medskip

We now prove \textbf{(2)}.

\begin{lemma}\label{lem:coer}
	For every \(\varepsilon>0\), the functional \(\Phi_\varepsilon\) is
	coercive on \(W^{1,p}_0(\Omega)\).
\end{lemma}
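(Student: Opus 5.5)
The plan is to show directly that \(\Phi_\varepsilon(u)\to+\infty\) as \(\|u\|_{1,p}\to\infty\). The key point is that the negative term \(-\frac1q A(u)\) is dominated pointwise by the positive term \(\frac1\gamma B(u)\), up to an additive constant. No Sobolev embedding beyond \(\gamma<p^*\) (which guarantees \(B(u)<\infty\)) is needed.

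\textbf{Step 1 (pointwise bound).} For a.e. \(x\in\Omega\) and every \(t\ge0\), consider
\[
g(x,t):=\frac1\gamma b(x)t^\gamma-\frac1q a(x)t^q
\ge \frac1\gamma \sigma_b t^\gamma-\frac1q \|a\|_\infty t^q .
\]
Since \(\gamma>q\), the one-variable function \(h(t)=\frac{\sigma_b}{\gamma}t^\gamma-\frac{\|a\|_\infty}{q}t^q\) is bounded below on \([0,\infty)\). Its minimum is attained at \(t_0=(\|a\|_\infty/\sigma_b)^{1/(\gamma-q)}\), and the minimum value is a finite constant \(-C_0\) with \(C_0\ge0\) depending only on \(q,\gamma,\|a\|_\infty,\sigma_b\). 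Hence \(g(x,t)\ge -C_0\) for all \(t\ge 0\) and a.e. \(x\).

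\textbf{Step 2 (integrate).} Taking \(t=|u(x)|\) in Step 1 and integrating over \(\Omega\), we obtain for every \(u\in W^{1,p}_0(\Omega)\)
\[
\Phi_\varepsilon(u)=\frac{\varepsilon}{p}T(u)+\int_\Omega g(x,|u|)\,dx\ge \frac{\varepsilon}{p}\|u\|_{1,p}^p-C_0|\Omega| .
\]
The right-hand side tends to \(+\infty\) as \(\|u\|_{1,p}\to\infty\), because \(\varepsilon>0\) and \(\Omega\) is bounded. This proves coercivity. As a by-product, \(\Phi_\varepsilon\) is bounded below by \(-C_0|\Omega|\), which will be useful later for minimization.

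There is no genuine obstacle here. The only point requiring care is Step 1: the lower bound must be uniform in \(x\). This is exactly where the assumptions \(a\in L^\infty\) and \(b\ge\sigma_b>0\) from \eqref{eq:Condab} enter. We also need each integral in \(\Phi_\varepsilon\) to be finite, which follows from \(q<\gamma<p^*\) and the Sobolev embedding.
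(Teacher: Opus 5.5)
Your proof is correct, and it rests on the same idea as the paper's: the $\gamma$-term absorbs the $q$-term because $\gamma>q$ and $\Omega$ is bounded. The difference is that you work pointwise, whereas the paper works at the level of norms.

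The paper first uses H\"older's inequality to get $A(u)\le C_1\|u\|_\gamma^q$ and $B(u)\ge C_2\|u\|_\gamma^\gamma$. It then argues along a sequence, distinguishing whether $\|u_n\|_\gamma$ stays bounded or diverges. You instead bound the integrand $g(x,t)$ below by $-C_0$ uniformly in $x$ and integrate, which gives the explicit global estimate $\Phi_\varepsilon(u)\ge\frac{\varepsilon}{p}\|u\|_{1,p}^p-C_0|\Omega|$.

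Your route is slightly cleaner. It avoids the case split, which as written in the paper tacitly requires passing to subsequences when $\|u_n\|_\gamma$ neither stays bounded nor tends to infinity. It also gives at once that $\Phi_\varepsilon$ is bounded below, which is what the subsequent minimization uses. In fact it is the same device the paper itself uses later in the proof of Theorem~\ref{thmCC2}, namely the bound $j_x(s)\ge c_1s^\gamma-c_2$.

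The paper's version keeps the growth term $\frac{C_2}{\gamma}\|u\|_\gamma^\gamma$ explicitly, but that extra information is not needed for the lemma.
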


\begin{proof}
	Since \(a,b\in L^\infty(\Omega)\), \(b\ge\sigma_b>0\), and \(q<\gamma\),
	there exist constants \(C_1,C_2>0\), independent of \(u\), such that
	\[
	\int_\Omega a(x)|u|^q\,dx
	\le
	C_1\|u\|_\gamma^q,
	\qquad
	\int_\Omega b(x)|u|^\gamma\,dx
	\ge
	C_2\|u\|_\gamma^\gamma .
	\]
	Therefore
	\[
	\Phi_\varepsilon(u)
	\ge
	\frac{\varepsilon}{p}\|\nabla u\|_p^p
	-
	C_1\|u\|_\gamma^q
	+
	\frac{C_2}{\gamma}\|u\|_\gamma^\gamma .
	\]
	Let \(\|u_n\|_{W^{1,p}_0}\to+\infty\). If \(\|u_n\|_\gamma\) is bounded,
	then the first term on the right-hand side tends to \(+\infty\). If
	\(\|u_n\|_\gamma\to+\infty\), then, since \(q<\gamma\), the positive term
	of order \(\|u_n\|_\gamma^\gamma\) dominates the negative term of order
	\(\|u_n\|_\gamma^q\). In both cases,
	\[
	\Phi_\varepsilon(u_n)\to+\infty .
	\]
	The lemma is proved.
\end{proof}

Let \(0<\varepsilon<\varepsilon_e^*\). Consider
\[
\hat\Phi_\varepsilon
:=
\inf_{u\in W^{1,p}_0(\Omega)}\Phi_\varepsilon(u).
\]
By the definition of \(\varepsilon_e^*\), there exists \(u_0\in\mathcal D\)
such that
\[
\varepsilon<\varepsilon_e(u_0).
\]
Set
\[
w_0:=s_e(u_0)u_0 .
\]
Then, by the definition of \(\varepsilon_e(u_0)\),
\[
R_e(w_0)=\varepsilon_e(u_0)>\varepsilon .
\]
Since
\[
R_e(w_0)
=
\frac{
	\frac1q\int_\Omega a(x)|w_0|^q\,dx
	-
	\frac1\gamma\int_\Omega b(x)|w_0|^\gamma\,dx
}{
	\frac1p\int_\Omega |\nabla w_0|^p\,dx
},
\]
the inequality \(R_e(w_0)>\varepsilon\) is equivalent to
\[
\Phi_\varepsilon(w_0)<0 .
\]
Consequently,
\begin{equation}\label{eq:negative-min-level}
	\hat\Phi_\varepsilon<0 .
\end{equation}

Let \((u_m)\subset W^{1,p}_0(\Omega)\) be a minimizing sequence:
\[
\Phi_\varepsilon(u_m)\to\hat\Phi_\varepsilon .
\]
By Lemma~\ref{lem:coer}, the sequence \((u_m)\) is bounded in
\(W^{1,p}_0(\Omega)\). Hence, passing to a subsequence, we may assume that
there exists \(u_\varepsilon\in W^{1,p}_0(\Omega)\) such that
\[
u_m\rightharpoonup u_\varepsilon
\quad\text{weakly in }W^{1,p}_0(\Omega),
\]
and, by the compact Sobolev embedding,
\[
u_m\to u_\varepsilon
\quad\text{strongly in }L^r(\Omega),
\qquad
1<r<p^* .
\]
In particular,
\[
u_m\to u_\varepsilon
\quad\text{strongly in }L^q(\Omega)
\quad\text{and}\quad
u_m\to u_\varepsilon
\quad\text{strongly in }L^\gamma(\Omega).
\]
Using the weak lower semicontinuity of the norm \(\|\nabla u\|_p\) and the
strong convergence of the nonlinear terms, we obtain
\[
\Phi_\varepsilon(u_\varepsilon)
\le
\liminf_{m\to +\infty}\Phi_\varepsilon(u_m)
=
\hat\Phi_\varepsilon .
\]
Therefore
\[
\Phi_\varepsilon(u_\varepsilon)=\hat\Phi_\varepsilon .
\]
In particular, by \eqref{eq:negative-min-level},
\[
\Phi_\varepsilon(u_\varepsilon)<0=\Phi_\varepsilon(0),
\]
and hence \(u_\varepsilon\not\equiv0\).

Since
\[
\Phi_\varepsilon(|u|)=\Phi_\varepsilon(u),
\qquad
|u|\in W^{1,p}_0(\Omega),
\]
we may replace \(u_\varepsilon\) by \(|u_\varepsilon|\) and assume that
\[
u_\varepsilon\ge0
\quad\text{in }\Omega .
\]
Since \(u_\varepsilon\) is a global minimizer of \(\Phi_\varepsilon\) on
\(W^{1,p}_0(\Omega)\), we have
\[
D_u\Phi_\varepsilon(u_\varepsilon)=0 .
\]
Thus \(u_\varepsilon\) is a weak solution of problem~\eqref{eq:1}.

Let us show that this solution is positive. Standard a priori estimates for
weak solutions of quasilinear elliptic equations with subcritical growth imply
that
\[
u_\varepsilon\in L^\infty(\Omega).
\]

Let us show that this solution is positive. We first recall that
\(u_\varepsilon\) is bounded. Indeed, since \(a,b\in L^\infty(\Omega)\) and
\[
1<p<q<\gamma<p^*,
\]
the right-hand side of equation~\eqref{eq:1} has subcritical growth in
\(u_\varepsilon\). Therefore the standard \(L^\infty\)-estimates for weak
solutions of quasilinear elliptic equations, obtained for instance by the
Moser iteration scheme, imply that
\[
u_\varepsilon\in L^\infty(\Omega);
\]
see, e.g., \cite{DrabekKufnerNicolosi1997}. Hence
\[
a(x)u_\varepsilon^{q-1}
-
b(x)u_\varepsilon^{\gamma-1}
\in L^\infty(\Omega).
\]
By the interior regularity theory for degenerate quasilinear elliptic
equations \cite{DiBened,Tolksdorf}, we obtain
\[
u_\varepsilon\in C^{1,\kappa}_{\mathrm{loc}}(\Omega)
\]
for some \(\kappa\in(0,1)\).

We now prove that \(u_\varepsilon>0\) in \(\Omega\). Since
\(u_\varepsilon\ge0\), \(u_\varepsilon\not\equiv0\), and
\(u_\varepsilon\in L^\infty(\Omega)\), there exists \(M>0\) such that
\[
0\le u_\varepsilon(x)\le M
\quad\text{for a.e. }x\in\Omega .
\]
Since \(\gamma>p\), we have
\[
u_\varepsilon^{\gamma-1}
=
u_\varepsilon^{\gamma-p}u_\varepsilon^{p-1}
\le
M^{\gamma-p}u_\varepsilon^{p-1}.
\]
Consequently,
\[
a(x)u_\varepsilon^{q-1}
-
b(x)u_\varepsilon^{\gamma-1}
\ge
-\|b\|_\infty M^{\gamma-p}u_\varepsilon^{p-1}
\quad\text{for a.e. }x\in\Omega .
\]
It follows from the equation for \(u_\varepsilon\) that, with some constant
\(C>0\),
\[
-\varepsilon\Delta_p u_\varepsilon
+
C u_\varepsilon^{p-1}
\ge0
\quad\text{in the weak sense in }\Omega .
\]
Equivalently,
\[
\Delta_p u_\varepsilon
-
\frac{C}{\varepsilon}u_\varepsilon^{p-1}
\le0
\quad\text{in the weak sense in }\Omega .
\]
By the strong maximum principle for the \(p\)-Laplacian
(see, for instance, \cite{Vazquez1984}) and by the connectedness of
\(\Omega\), we conclude that
\[
u_\varepsilon>0
\quad\text{in }\Omega .
\]

Thus \(u_\varepsilon\) is a positive weak solution of problem~\eqref{eq:1}.
Moreover, since \(u_\varepsilon\) is a global minimizer of
\(\Phi_\varepsilon\), this solution is a ground state.

It remains to verify the sign of the second derivative of the function
\(t\mapsto\Phi_\varepsilon(tu_\varepsilon)\) at \(t=1\). Put
\[
T_\varepsilon:=\int_\Omega |\nabla u_\varepsilon|^p\,dx,
\qquad
A_\varepsilon:=\int_\Omega a(x)u_\varepsilon^q\,dx,
\qquad
B_\varepsilon:=\int_\Omega b(x)u_\varepsilon^\gamma\,dx .
\]
Since \(u_\varepsilon\) is a critical point, we have
\[
\varepsilon T_\varepsilon-A_\varepsilon+B_\varepsilon=0.
\]
Moreover,
\[
\Phi_\varepsilon(u_\varepsilon)<0 .
\]
Using \(A_\varepsilon=\varepsilon T_\varepsilon+B_\varepsilon\), we obtain
\[
0>
\Phi_\varepsilon(u_\varepsilon)
=
\varepsilon T_\varepsilon\left(\frac1p-\frac1q\right)
+
B_\varepsilon\left(\frac1\gamma-\frac1q\right).
\]
Hence
\[
B_\varepsilon>
\frac{\gamma(q-p)}{p(\gamma-q)}\,\varepsilon T_\varepsilon .
\]
Consequently,
\[
\left.\frac{d^2}{dt^2}\Phi_\varepsilon(tu_\varepsilon)\right|_{t=1}
=
(p-q)\varepsilon T_\varepsilon+(\gamma-q)B_\varepsilon
>
0 .
\]
Assertion \textbf{(2)} is proved.

\medskip

We now prove \textbf{(3)}. Introduce the truncated functional
\[
\Phi_\varepsilon^+(u)
=
\frac{\varepsilon}{p}\int_\Omega |\nabla u|^p\,dx
-
\frac1q\int_\Omega a(x)(u^+)^q\,dx
+
\frac1\gamma\int_\Omega b(x)(u^+)^\gamma\,dx,
\qquad
u^+:=\max\{u,0\}.
\]
The functional \(\Phi_\varepsilon^+\) satisfies the same coercivity estimates
as \(\Phi_\varepsilon\) and satisfies the Palais--Smale condition.

Indeed, let \((u_n)\subset W^{1,p}_0(\Omega)\) be a Palais--Smale sequence
for \(\Phi_\varepsilon^+\), that is,
\[
\Phi_\varepsilon^+(u_n)\ \text{is bounded},
\qquad
D\Phi_\varepsilon^+(u_n)\to0
\quad\text{in }(W^{1,p}_0(\Omega))^* .
\]
By coercivity, \((u_n)\) is bounded in \(W^{1,p}_0(\Omega)\). Hence, passing
to a subsequence, we may assume that
\[
u_n\rightharpoonup u
\quad\text{weakly in }W^{1,p}_0(\Omega),
\]
and
\[
u_n\to u
\quad\text{strongly in }L^r(\Omega),
\qquad
1<r<p^* .
\]
In particular,
\[
u_n\to u
\quad\text{strongly in }L^q(\Omega)
\quad\text{and}\quad
u_n\to u
\quad\text{strongly in }L^\gamma(\Omega).
\]
Since the map \(\mathbb R\ni z\mapsto z^+\) is Lipschitz continuous, we also
have
\[
u_n^+\to u^+
\quad\text{strongly in }L^q(\Omega)
\quad\text{and}\quad
u_n^+\to u^+
\quad\text{strongly in }L^\gamma(\Omega).
\]

Substituting \(u_n-u\) into the derivative \(D\Phi_\varepsilon^+(u_n)\), we
obtain
\[
\begin{aligned}
	o(1)
	&=
	D\Phi_\varepsilon^+(u_n)(u_n-u)
	\\
	&=
	\varepsilon
	\int_\Omega
	|\nabla u_n|^{p-2}\nabla u_n\cdot
	(\nabla u_n-\nabla u)\,dx
	\\
	&\quad
	-
	\int_\Omega
	a(x)(u_n^+)^{q-1}(u_n-u)\,dx
	+
	\int_\Omega
	b(x)(u_n^+)^{\gamma-1}(u_n-u)\,dx .
\end{aligned}
\]
The last two integrals tend to zero. For example,
\[
\left|
\int_\Omega
a(x)(u_n^+)^{q-1}(u_n-u)\,dx
\right|
\le
\|a\|_\infty
\|u_n^+\|_q^{q-1}
\|u_n-u\|_q
\to0,
\]
and, similarly,
\[
\int_\Omega
b(x)(u_n^+)^{\gamma-1}(u_n-u)\,dx
\to0 .
\]
Therefore
\[
\int_\Omega
|\nabla u_n|^{p-2}\nabla u_n\cdot
(\nabla u_n-\nabla u)\,dx
\to0 .
\]
We use the standard \(S^+\)-type property of the \(p\)-Laplacian: if
\[
u_n\rightharpoonup u
\quad\text{weakly in }W^{1,p}_0(\Omega)
\]
and
\[
\limsup_{n\to+\infty}
\int_\Omega
|\nabla u_n|^{p-2}\nabla u_n\cdot
(\nabla u_n-\nabla u)\,dx
\le0,
\]
then
\[
u_n\to u
\quad\text{strongly in }W^{1,p}_0(\Omega).
\]
This property is a standard consequence of the monotonicity of the
\(p\)-Laplacian and the uniform convexity of \(W^{1,p}_0(\Omega)\); see, for
example, the results on monotone operators of type \(S^+\) in
\cite{DrabekMilota}. Applying this property to \((u_n)\), we obtain
\[
u_n\to u
\quad\text{strongly in }W^{1,p}_0(\Omega).
\]
Thus \(\Phi_\varepsilon^+\) satisfies the Palais--Smale condition.

We next verify the mountain-pass geometry. By the Sobolev embedding,
\[
\Phi_\varepsilon^+(u)
\ge
\left(
\frac{\varepsilon}{p}
-
C\|u\|_{W^{1,p}_0}^{q-p}
\right)
\|u\|_{W^{1,p}_0}^{p}.
\]
Choosing \(\rho>0\) sufficiently small, we obtain
\[
\Phi_\varepsilon^+(u)>0
\quad\text{for}\quad
0<\|u\|_{W^{1,p}_0}\le \rho .
\]
In particular, there exists \(\delta>0\) such that
\[
\Phi_\varepsilon^+(u)\ge\delta
\quad\text{whenever}\quad
\|u\|_{W^{1,p}_0}=\rho .
\]
On the other hand, the ground state \(u_\varepsilon\) found above is
nonnegative and satisfies
\[
\Phi_\varepsilon^+(u_\varepsilon)
=
\Phi_\varepsilon(u_\varepsilon)
<0 .
\]
Therefore \(\|u_\varepsilon\|_{W^{1,p}_0}>\rho\), and
\(\Phi_\varepsilon^+\) has the mountain-pass geometry.

By the mountain pass theorem \cite{AmbrRabin}, there exists a critical point
\(v_\varepsilon\in W^{1,p}_0(\Omega)\) such that
\[
\Phi_\varepsilon^+(v_\varepsilon)
=
c_\varepsilon
:=
\inf_{\xi\in\Gamma}
\max_{t\in[0,1]}
\Phi_\varepsilon^+(\xi(t))
\ge
\delta>0,
\]
where
\[
\Gamma
=
\left\{
\xi\in C([0,1];W^{1,p}_0(\Omega)):
\xi(0)=0,\ \xi(1)=u_\varepsilon
\right\}.
\]
In particular,
\[
v_\varepsilon\not\equiv0 .
\]

We show that \(v_\varepsilon\ge0\). Since
\[
D\Phi_\varepsilon^+(v_\varepsilon)=0,
\]
we take \(-v_\varepsilon^-\) as a test function, where
\[
v_\varepsilon^-:=\max\{-v_\varepsilon,0\}.
\]
The nonlinear terms involving \(v_\varepsilon^+\) vanish, and we obtain
\[
\varepsilon
\int_\Omega |\nabla v_\varepsilon^-|^p\,dx
=
0 .
\]
Consequently,
\[
v_\varepsilon^-=0,
\qquad
v_\varepsilon\ge0
\quad\text{in }\Omega .
\]
Thus \(v_\varepsilon^+=v_\varepsilon\), and \(v_\varepsilon\) is a critical
point of the original functional \(\Phi_\varepsilon\). Hence
\(v_\varepsilon\) is a weak solution of problem~\eqref{eq:1}.

Since
\[
\Phi_\varepsilon(v_\varepsilon)
=
\Phi_\varepsilon^+(v_\varepsilon)
=
c_\varepsilon
>
0,
\]
this solution is distinct from the ground state \(u_\varepsilon\), because
\[
\Phi_\varepsilon(u_\varepsilon)<0 .
\]

As above, the a priori estimates and interior regularity results for the
\(p\)-Laplacian imply that
\[
v_\varepsilon\in C^{1,\kappa}_{\mathrm{loc}}(\Omega)
\]
for some \(\kappa\in(0,1)\). Finally, since
\(v_\varepsilon\ge0\), \(v_\varepsilon\not\equiv0\), and
\[
-\varepsilon\Delta_p v_\varepsilon
+
C v_\varepsilon^{p-1}
\ge0
\quad\text{in the weak sense in }\Omega
\]
with some constant \(C>0\), the strong maximum principle for the
\(p\)-Laplacian (see, for instance, \cite{Vazquez1984}) gives
\[
v_\varepsilon>0
\quad\text{in }\Omega .
\]
Assertion \textbf{(3)} is proved.


\section{Proof of Theorem~\ref{thmCC2}}

Set
\[
J(u):=
-\frac1q\int_\Omega a(x)|u|^q\,dx
+
\frac1\gamma\int_\Omega b(x)|u|^\gamma\,dx .
\]
For \(s\ge0\), denote
\[
j_x(s):=
-\frac{a(x)}q s^q+\frac{b(x)}\gamma s^\gamma .
\]
Then
\[
J(u)=\int_\Omega j_x(|u(x)|)\,dx .
\]

Since \(a(x)\ge\sigma_a>0\) and \(b(x)\ge\sigma_b>0\) for a.e.
\(x\in\Omega\), the function \(s\mapsto j_x(s)\) has a unique positive
global minimizer on \([0,+\infty)\), namely
\[
\bar u_0(x)
=
\left(\frac{a(x)}{b(x)}\right)^{1/(\gamma-q)} .
\]
Hence
\begin{equation}\label{eq:J-min}
	J(u)\ge J(\bar u_0)
\end{equation}
for every \(u\in L^\gamma(\Omega)\). Moreover, the assumptions
\(a,b\in L^\infty(\Omega)\), \(a\ge\sigma_a>0\), and
\(b\ge\sigma_b>0\) imply that
\[
\bar u_0\in L^\infty(\Omega),
\qquad
\bar u_0>0
\quad\text{for a.e. }x\in\Omega .
\]

Let \(u_\varepsilon\) be a positive ground state of problem~\eqref{eq:1},
\(0<\varepsilon<\varepsilon_e^*\). We first note that
\(u_\varepsilon\) is a global minimizer of \(\Phi_\varepsilon\) on
\(W^{1,p}_0(\Omega)\). Indeed, by the proof of Theorem~\ref{thm1}, the
functional \(\Phi_\varepsilon\) has a global minimizer \(z_\varepsilon\),
and this minimizer is a weak solution of \eqref{eq:1}. Since
\(u_\varepsilon\) is a ground state, we have
\[
\Phi_\varepsilon(u_\varepsilon)
\le
\Phi_\varepsilon(z_\varepsilon).
\]
On the other hand, by the global minimality of \(z_\varepsilon\),
\[
\Phi_\varepsilon(z_\varepsilon)
\le
\Phi_\varepsilon(u_\varepsilon).
\]
Thus
\[
\Phi_\varepsilon(u_\varepsilon)
=
\inf_{u\in W^{1,p}_0(\Omega)}\Phi_\varepsilon(u).
\]
Therefore, from \eqref{eq:J-min}, we obtain
\begin{equation}\label{eq:lower-energy}
	\Phi_\varepsilon(u_\varepsilon)
	\ge
	J(u_\varepsilon)
	\ge
	J(\bar u_0).
\end{equation}

On the other hand, since \(C_0^\infty(\Omega)\) is dense in
\(L^\gamma(\Omega)\), for every \(\delta>0\) one can choose a nonnegative
function \(w_\delta\in C_0^\infty(\Omega)\) such that
\[
\|w_\delta-\bar u_0\|_\gamma<\delta .
\]
Then, by the global minimality of \(u_\varepsilon\),
\[
\Phi_\varepsilon(u_\varepsilon)
\le
\Phi_\varepsilon(w_\delta)
=
\frac{\varepsilon}{p}\int_\Omega |\nabla w_\delta|^p\,dx
+
J(w_\delta).
\]
Consequently,
\[
\limsup_{\varepsilon\to0^+}
\Phi_\varepsilon(u_\varepsilon)
\le
J(w_\delta).
\]
Since \(q<\gamma\) and \(\Omega\) is bounded, the functional \(J\) is
continuous with respect to strong convergence in \(L^\gamma(\Omega)\).
Passing to the limit as \(\delta\to0\), we get
\begin{equation}\label{eq:upper-energy}
	\limsup_{\varepsilon\to0^+}
	\Phi_\varepsilon(u_\varepsilon)
	\le
	J(\bar u_0).
\end{equation}
It follows from \eqref{eq:lower-energy} and \eqref{eq:upper-energy} that
\begin{equation}\label{eq:energy-limit}
	\Phi_\varepsilon(u_\varepsilon)
	\to
	J(\bar u_0)
	\quad\text{as } \varepsilon\to0^+ .
\end{equation}
Moreover,
\[
0
\le
J(u_\varepsilon)-J(\bar u_0)
\le
\Phi_\varepsilon(u_\varepsilon)-J(\bar u_0),
\]
and therefore
\begin{equation}\label{eq:J-limit}
	J(u_\varepsilon)
	\to
	J(\bar u_0)
	\quad\text{as } \varepsilon\to0^+ .
\end{equation}

We now prove that \(u_\varepsilon\to\bar u_0\) in measure. First observe
that the family \(\{u_\varepsilon\}\) is bounded in \(L^\gamma(\Omega)\) as
\(\varepsilon\to0^+\). Indeed, since
\(a,b\in L^\infty(\Omega)\), \(b\ge\sigma_b>0\), and \(q<\gamma\), there
exist constants \(c_1,c_2>0\), independent of \(x\) and \(s\ge0\), such that
\[
j_x(s)
=
-\frac{a(x)}q s^q+\frac{b(x)}\gamma s^\gamma
\ge
c_1s^\gamma-c_2 .
\]
This estimate and \eqref{eq:J-limit} imply that
\(\{u_\varepsilon\}\) is bounded in \(L^\gamma(\Omega)\) for all sufficiently
small \(\varepsilon>0\).

Next, since \(u_\varepsilon\ge0\), from \eqref{eq:J-limit} we have
\begin{equation}\label{eq:integral-gap}
	\int_\Omega
	\bigl(
	j_x(u_\varepsilon(x))-j_x(\bar u_0(x))
	\bigr)\,dx
	\to0 .
\end{equation}
The integrand is nonnegative by the definition of \(\bar u_0(x)\).

Put
\[
\hat a:=\|a\|_\infty,
\qquad
\hat b:=\|b\|_\infty .
\]
Then, for a.e. \(x\in\Omega\),
\[
0<\rho_-
:=
\left(\frac{\sigma_a}{\hat b}\right)^{1/(\gamma-q)}
\le
\bar u_0(x)
\le
\left(\frac{\hat a}{\sigma_b}\right)^{1/(\gamma-q)}
=:\rho_+
<+\infty .
\]

Fix \(\eta>0\). We claim that there exists \(\kappa_\eta>0\) such that
\begin{equation}\label{eq:uniform-separation}
	j_x(s)-j_x(\bar u_0(x))
	\ge
	\kappa_\eta
\end{equation}
for all \(s\ge0\) satisfying
\[
|s-\bar u_0(x)|\ge\eta,
\]
and for a.e. \(x\in\Omega\).

Consider the family of functions
\[
j_{\alpha,\beta}(s)
:=
-\frac{\alpha}{q}s^q+\frac{\beta}{\gamma}s^\gamma,
\qquad
(\alpha,\beta)\in[\sigma_a,\hat a]\times[\sigma_b,\hat b].
\]
For every pair \((\alpha,\beta)\), the function \(j_{\alpha,\beta}\) has a
unique positive global minimizer
\[
\rho(\alpha,\beta)
=
\left(\frac{\alpha}{\beta}\right)^{1/(\gamma-q)} ,
\]
and
\[
\rho(\alpha,\beta)\in[\rho_-,\rho_+].
\]

First observe that
\[
j_{\alpha,\beta}(s)-j_{\alpha,\beta}(\rho(\alpha,\beta))
\to+\infty
\quad\text{as }s\to+\infty
\]
uniformly with respect to
\[
(\alpha,\beta)\in[\sigma_a,\hat a]\times[\sigma_b,\hat b].
\]
Indeed,
\[
j_{\alpha,\beta}(s)
\ge
-\frac{\hat a}{q}s^q+\frac{\sigma_b}{\gamma}s^\gamma ,
\]
whereas the function
\[
(\alpha,\beta)
\mapsto
j_{\alpha,\beta}(\rho(\alpha,\beta))
\]
is continuous on the compact set
\[
[\sigma_a,\hat a]\times[\sigma_b,\hat b].
\]
Thus there exists \(M_0>0\) such that
\[
\bigl|j_{\alpha,\beta}(\rho(\alpha,\beta))\bigr|
\le
M_0
\]
for all \((\alpha,\beta)\in[\sigma_a,\hat a]\times[\sigma_b,\hat b]\).
Hence
\[
j_{\alpha,\beta}(s)-j_{\alpha,\beta}(\rho(\alpha,\beta))
\ge
\frac{\sigma_b}{\gamma}s^\gamma
-
\frac{\hat a}{q}s^q
-
M_0 .
\]
Since \(\gamma>q\), the right-hand side tends to \(+\infty\) as
\(s\to+\infty\), uniformly with respect to \((\alpha,\beta)\). Therefore,
one can choose \(M>0\) such that
\begin{equation}\label{eq:large-s-separation}
	j_{\alpha,\beta}(s)-j_{\alpha,\beta}(\rho(\alpha,\beta))
	\ge1
\end{equation}
for all \(s\ge M\) and all
\[
(\alpha,\beta)\in[\sigma_a,\hat a]\times[\sigma_b,\hat b].
\]

It remains to consider the region \(0\le s\le M\). Let
\[
K_\eta
:=
\left\{
(\alpha,\beta,s):
\alpha\in[\sigma_a,\hat a],\
\beta\in[\sigma_b,\hat b],\
s\in[0,M],\
|s-\rho(\alpha,\beta)|\ge\eta
\right\}.
\]
If \(K_\eta\ne\varnothing\), then \(K_\eta\) is compact. The function
\[
(\alpha,\beta,s)
\mapsto
j_{\alpha,\beta}(s)-j_{\alpha,\beta}(\rho(\alpha,\beta))
\]
is continuous and strictly positive on \(K_\eta\), since equality to zero is
possible only when \(s=\rho(\alpha,\beta)\), and such points are excluded by
the condition \(|s-\rho(\alpha,\beta)|\ge\eta\). Therefore
\[
\kappa_\eta'
:=
\min_{K_\eta}
\bigl[
j_{\alpha,\beta}(s)-j_{\alpha,\beta}(\rho(\alpha,\beta))
\bigr]
>0 .
\]
If \(K_\eta=\varnothing\), set, for instance, \(\kappa_\eta'=1\). In both
cases, define
\[
\kappa_\eta:=\min\{\kappa_\eta',1\}>0.
\]
Then \eqref{eq:large-s-separation} and the definition of \(\kappa_\eta'\)
imply that
\[
j_{\alpha,\beta}(s)-j_{\alpha,\beta}(\rho(\alpha,\beta))
\ge
\kappa_\eta
\]
for all \(s\ge0\) and all
\((\alpha,\beta)\in[\sigma_a,\hat a]\times[\sigma_b,\hat b]\) such that
\[
|s-\rho(\alpha,\beta)|\ge\eta .
\]
Applying this assertion with
\[
\alpha=a(x),
\qquad
\beta=b(x),
\qquad
\rho(\alpha,\beta)=\bar u_0(x),
\]
we obtain \eqref{eq:uniform-separation}.

Consequently,
\[
\kappa_\eta
\operatorname{meas}
\{x\in\Omega:\ |u_\varepsilon(x)-\bar u_0(x)|\ge\eta\}
\le
\int_\Omega
\bigl(
j_x(u_\varepsilon(x))-j_x(\bar u_0(x))
\bigr)\,dx .
\]
By \eqref{eq:integral-gap}, the right-hand side tends to zero. Hence, for
every \(\eta>0\),
\[
\operatorname{meas}
\{x\in\Omega:\ |u_\varepsilon(x)-\bar u_0(x)|\ge\eta\}
\to0,
\]
that is,
\begin{equation}\label{eq:measure-convergence}
	u_\varepsilon\to\bar u_0
	\quad\text{in measure in }\Omega .
\end{equation}

We now prove strong convergence in \(L^r(\Omega)\) for \(1\le r<\gamma\).
Since \(\{u_\varepsilon\}\) is bounded in \(L^\gamma(\Omega)\) and
\(\bar u_0\in L^\gamma(\Omega)\), the family
\(\{u_\varepsilon-\bar u_0\}\) is bounded in \(L^\gamma(\Omega)\).
Therefore, for every \(1\le r<\gamma\), the family
\(\{|u_\varepsilon-\bar u_0|^r\}\) is bounded in
\(L^{\gamma/r}(\Omega)\), where \(\gamma/r>1\). Hence it is uniformly
integrable. Together with \eqref{eq:measure-convergence}, Vitali's
convergence theorem gives
\[
\int_\Omega |u_\varepsilon-\bar u_0|^r\,dx\to0 .
\]
Thus
\[
u_\varepsilon\to\bar u_0
\quad\text{strongly in }L^r(\Omega),
\qquad
1\le r<\gamma .
\]

Moreover,
\[
u_\varepsilon\rightharpoonup\bar u_0
\quad\text{weakly in }L^\gamma(\Omega).
\]
Indeed, the family \(\{u_\varepsilon\}\) is bounded in the reflexive space
\(L^\gamma(\Omega)\). Hence every sequence \(\varepsilon_m\to0^+\) contains
a subsequence converging weakly in \(L^\gamma(\Omega)\). By
\eqref{eq:measure-convergence}, any such weak limit must coincide with
\(\bar u_0\). Therefore the whole family \(u_\varepsilon\) converges weakly
to \(\bar u_0\) in \(L^\gamma(\Omega)\). Since \(\Omega\) is bounded, it
follows in particular that
\[
u_\varepsilon-\bar u_0\rightharpoonup0
\quad\text{weakly in }L^r(\Omega),
\qquad
1<r\le\gamma .
\]

Finally, by the definition of \(\bar u_0\), we have
\[
a(x)\bar u_0^{q-1}(x)
-
b(x)\bar u_0^{\gamma-1}(x)
=
0
\quad\text{for a.e. }x\in\Omega .
\]
Thus \(\bar u_0\) satisfies the limiting equation \eqref{eq:1} with
\(\varepsilon=0\):
\[
a(x)\bar u_0^{q-1}
=
b(x)\bar u_0^{\gamma-1}
\quad\text{a.e. in }\Omega .
\]
The theorem is proved.

\bibliographystyle{elsarticle-num}

\end{document}